\def\C{\mathbb C}    
\def\R{\mathbb R}  \def\Z{\mathbb Z}  
\def\D{\mathbb D}
\def\F{\mathcal F}  
\def\G{\Gamma}
\def\PSL{\operatorname{PSU}(1,1)} 
\def\Homeo{\operatorname{Homeo}_{+}(S^1)} 
\def\tHomeo{\widetilde{\operatorname{Homeo}}_{+}(S^1)} 
\def\ofg{\pi_{1}^{\operatorname{orb}}} 
\def\vol{\operatorname{vol}} 
\def\eu{\operatorname{eu}}
\def\eo{e^{\operatorname{orb}}}
\def\hol{\operatorname{hol}}
\def\m{\mathfrak{m}}
\def\pr{\operatorname{pr}}
\numberwithin{equation}{section}
\title{Harmonic measures and rigidity for transverse foliations on Seifert $3$-manifolds}
\author{
  Adachi, Masanori\footnote{Department of Mathematics, Faculty of Science, Shizuoka University, 836 Ohya, Suruga-ku, Shizuoka, 422-8529, Japan}\\
  \texttt{adachi.masanori@shizuoka.ac.jp}
  \and
  Matsuda, Yoshifumi\footnote{Department of Mathematical Sciences, College of Science and Engineering, Aoyama Gakuin University, 5-10-1 Fuchinobe, Chuo-ku, Sagamihara, Kanagawa, 252-5258,  Japan }\\
  \texttt{ymatsuda@math.aoyama.ac.jp}
  \and
  Nozawa, Hiraku\footnote{Department of Mathematical Sciences, Colleges of Science and Engineering, Ritsumeikan University, 1-1-1 Nojihigashi, Kusatsu, Shiga, 525-8577, Japan }\\
  \texttt{hnozawa@fc.ritsumei.ac.jp}
}
\date{}
\newtheorem{thm}{Theorem}[section]
\newtheorem{cor}[thm]{Corollary}
\newtheorem{lem}[thm]{Lemma}
\newtheorem{prop}[thm]{Proposition}
\theoremstyle{definition}
\newtheorem{defn}[thm]{Definition}
\theoremstyle{remark}
\newtheorem{rem}[thm]{Remark}
\begin{document}

\maketitle

\renewcommand*{\thefootnote}{}

\begin{MSC}
    \textit{2020 Mathematics Subject Classification}. 22D50, 53C12, 57M60, 58D19, 57R20, 31A15
\end{MSC}

\begin{keywords}
    \textit{Key words and phrases}. group action, surface group, Euler number, Seifert manifold, harmonic measure, Gauss-Bonnet formula, rigidity
\end{keywords}

\begin{abstract}
Thurston proposed, in part of an unfinished manuscript, to study surface group actions on $S^1$ by using an $S^1$-connection on the suspension bundle obtained from a harmonic measure. Following the approach and previous work of the authors, we study the actions of general lattices of $\PSL$ on $S^1$. We prove the Gauss--Bonnet formula for the $S^1$-connection associated with a harmonic measure, and show that a harmonic measure on the suspension bundle of the action with maximal Euler number has rigidity, having a form closely related to the Poisson kernel. As an application, we prove a semiconjugacy rigidity for foliations with maximal Euler number, which is analogous to theorems due to Matsumoto, Minakawa and Burger--Iozzi--Wienhard.
\end{abstract}

\section{Introduction}
The Euler number is fundamental in the rigidity property of surface group actions on the circle $S^1$. Let $\G$ be a lattice in $\PSL$ and $\rho : \G \to \Homeo$ a homomorphism. In the case where $\G$ is torsion-free and uniform, the quotient $\Sigma = \G \backslash \D$ of the Poincar\'{e} disk $\D$ is a closed hyperbolic surface with $\pi_1 \Sigma \cong \G$. The Euler number $e(\rho)$ satisfies the Milnor--Wood inequality $|e(\rho)| \leq -e(\Sigma)$ \cite{Milnor,Wood}, and the equality holds if and only if $\rho$ is semi-conjugate to the Fuchsian action $\G \hookrightarrow \PSL \to \Homeo$ or its complex conjugate by Matsumoto's rigidity theorem \cite{Matsumoto1}.

If $\G$ is torsion-free and non-uniform, then the usual notion of the Euler number does not make sense since $\Sigma = \G \backslash \D$ is an open surface and the Euler number in the standard sense would be always trivial. Burger--Iozzi--Wienhard \cite{BIW}, however, extended the definition of the Euler number $e(\rho)$ to this case based on Ghys' bounded Euler class \cite{Ghys} and generalized the Milnor--Wood inequality and Matsumoto's rigidity theorem.
In our previous work \cite{AMN}, we obtained rigidity results for foliated harmonic measures of the suspension foliation $\F$ of $\rho$ based on the approach due to Frankel \cite{Frankel} and Thurston \cite{Thurston} to give alternative proofs to these preceding results. We used Thurston's connection on the suspension bundle associated with a harmonic measure, whose curvature $K$ satisfies $|K| \leq 1$. Note that the connection is merely continuous and its curvature is defined in a weak sense (see Definition \ref{def:conconn}).

In this article we extend these results for the general lattices in $\PSL$. First we will extend the definition of the Euler number $e(\rho)$ of the $\G$-action on $S^1$ to this general case following the manner of \cite{BIW}, and express $e(\rho)$ in terms of the translation numbers of the lifts of certain elements in $\rho(\G)$ to the universal cover $\tHomeo$ of $\Homeo$ (See Proposition \ref{prop:erho}). A key is the following construction of Thurston's connection and the Gauss--Bonnet formula in this context.

\begin{thm}\label{thm:GB}
Let $\G$ be a lattice in $\PSL$ with corresponding orbifold $\Sigma := \G \backslash \D$ and $\rho : \G \to \Homeo$ a homomorphism.  Assume that $\rho (\Gamma)$ has no finite orbit in $S^1$. Take a harmonic measure $\mu$ on the suspension foliation of $\rho$ on the suspension $S^1$-orbibundle $\Sigma \times_{\rho} S^1$. The $S^1$-connection form $\overline\omega$ associated with $\mu$ has curvature of the form $K \vol$ in a weak sense, where $\vol$ is the hyperbolic volume form on $\Sigma$ and $K$ is a measurable function such that $|K| \leq 1$, and we have
\[
e(\rho) = \frac{1}{2\pi} \int_{\Sigma} K(z) \vol(dz).
\]
\end{thm}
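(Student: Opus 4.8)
The plan is to bootstrap from the torsion-free case treated in \cite{AMN} by passing to a torsion-free subgroup of finite index, so that the orbifold $\Sigma$ gets replaced by a genuine complete finite-area hyperbolic surface. By Selberg's lemma I would pick a torsion-free $\Gamma' \le \Gamma$ with $N := [\Gamma : \Gamma'] < \infty$, and let $\bar\pi \colon \Sigma' := \Gamma' \backslash \D \to \Sigma$ be the induced degree-$N$ orbifold covering. First I would check that $\rho|_{\Gamma'}$ still has no finite orbit in $S^1$: a finite $\rho(\Gamma')$-invariant set $F$ would make $\bigcup_g \rho(g) F$, the union over coset representatives of $\Gamma / \Gamma'$, a finite $\rho(\Gamma)$-invariant set, contrary to hypothesis. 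So the hypotheses of \cite{AMN} hold for $(\Gamma', \rho|_{\Gamma'})$.

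Next I would transport the harmonic measure and Thurston's connection along $\bar\pi$. The suspensions $\Sigma \times_\rho S^1$ and $\Sigma' \times_\rho S^1$ share the universal cover $\D \times S^1$, and a foliated harmonic measure corresponds to a $\rho$-equivariant harmonic family of probability measures $\{\mu_z\}_{z \in \D}$ on $S^1$ — a datum that only sees the universal cover. Hence the family $\{\mu_z\}$ attached to $\mu$ is verbatim $\rho|_{\Gamma'}$-equivariant and harmonic, defining a harmonic measure $\mu'$ on $\Sigma' \times_\rho S^1$ (the normalized lift of $\mu$), and Thurston's connection form on $\D \times S^1$ built from $\{\mu_z\}$ is unchanged, so its descent $\overline\omega'$ to $\Sigma' \times_\rho S^1$ is the pullback of $\overline\omega$. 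Since exterior differentiation commutes with pullback, including for continuous forms whose weak exterior derivative is a measure (by mollification), the curvature of $\overline\omega'$ is $\bar\pi^*(K\vol) = K' \vol'$ with $K' = K \circ \bar\pi$ and $\vol'$ the hyperbolic volume form of $\Sigma'$, and since $\bar\pi$ is a local isometry off its measure-zero branch locus,
\[
\int_{\Sigma'} K'\, \vol' = N \int_\Sigma K\, \vol .
\]

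Then I would apply \cite{AMN} to $(\Gamma', \rho|_{\Gamma'}, \mu')$, obtaining $e(\rho|_{\Gamma'}) = \tfrac{1}{2\pi}\int_{\Sigma'} K'\vol'$, hence $e(\rho|_{\Gamma'}) = \tfrac{N}{2\pi}\int_\Sigma K\vol$. To finish I would invoke multiplicativity of the generalized Euler number under finite-index restriction, $e(\rho|_{\Gamma'}) = N\, e(\rho)$, and divide by $N$.

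The step I expect to be the main obstacle is exactly this last one: proving $e(\rho|_{\Gamma'}) = N\, e(\rho)$ for the Burger--Iozzi--Wienhard Euler number, which calls for a transfer argument for the bounded Euler class together with the fact that the relative fundamental class of $\Sigma$ pulls back to $N$ times that of $\Sigma'$; alternatively one can read it off Proposition~\ref{prop:erho}, tracking how each cone point of order $q_i$ (which splits into $N/q_i$ ordinary points of $\Sigma'$, as $\Gamma'$ is torsion-free) and each cusp behave under the cover. A secondary point is that \cite{AMN} must supply the Gauss--Bonnet identity in the exact generality used here, namely for a torsion-free lattice that may be non-uniform with $\rho$ having no finite orbit. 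If one wants to avoid the transfer step entirely, the conclusion can instead be reached directly: excise small disks about the cone points and deep horoball neighborhoods of the cusps, apply Stokes to $s^*\overline\omega$ for a section $s$ over the resulting compact bordered surface, and identify the boundary integrals with the elliptic rotation numbers (the enclosed curvature tends to $0$ as the disks shrink, since $|K| \le 1$) and with the parabolic translation numbers (using the behavior of $\mu_z$ as $z$ approaches the cusp), summing to $e(\rho)$ via Proposition~\ref{prop:erho}; in that route the delicate point is the cusp asymptotics of the harmonic measure.
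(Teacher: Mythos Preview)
Your reduction to the torsion-free case via Selberg's lemma is correct and yields a valid proof once the multiplicativity $e(\rho|_{\Gamma'}) = N\,e(\rho)$ is established; this follows, as you indicate, from naturality of the relative long exact sequence in bounded cohomology together with $\bar\pi_*[\widetilde{\Sigma}',\partial\widetilde{\Sigma}'] = N[\Sigma',\partial\Sigma']$ for the compactified pairs. The construction of $\overline\omega$ and its curvature are carried out on $\D\times\R$, so they are literally the same for $\G$ and $\Gamma'$, and the bound $|K|\le 1$ and the form $K\vol$ of the curvature transfer along $\bar\pi$ exactly as you say.

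The paper, however, does \emph{not} argue by passing to a cover; it takes the direct route that you sketch only as an alternative in your last paragraph. Working on $\Sigma$ itself, it excises small disks $D_i^s$ about the cone points (and a free fiber) together with horocyclic neighborhoods of the cusps, applies the Stokes-type identity of Lemma~\ref{lem:GB} on the resulting compact bordered surface $\Sigma^s$, and then shows that the boundary holonomy translation numbers $\tau(\widetilde{\hol}_{\overline\omega}(\partial D_i^s))$ and $\tau(\widetilde{\hol}_{\overline\omega}(c_j^s))$ converge to $\beta_0$, $\beta_i/\alpha_i$, and $-\tau_{\operatorname{dec}}(\rho(c_j))$ respectively as $s\to\infty$, matching the formula of Proposition~\ref{prop:erho}. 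The bound $|K|\le 1$ is obtained directly via the isoperimetric and Harnack inequalities, with no appeal to the torsion-free case. Your covering argument is more economical given \cite{AMN} as a black box, but it outsources the analytic work (notably the cusp asymptotics of the harmonic measure) and requires the bounded-cohomology transfer step; the paper's direct argument is self-contained and makes the contribution of each cone point explicit through the local holonomy near the $D_i^s$, which is exactly where the orbifold structure enters and where the normalized Seifert invariants appear.
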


The Milnor--Wood inequality is a direct consequence of this result.

\begin{cor}\label{cor:MW}
For a lattice $\G$ in $\PSL$ and a homomorphism $\rho : \G \to \Homeo$, we have
\begin{equation}\label{eq:MW}
    |e(\rho)| \leq -\eo(\Sigma)
\end{equation}
where $\eo(\Sigma)$ is the orbifold Euler characteristic of $\Sigma = \G \backslash \D$ due to Satake.  
\end{cor}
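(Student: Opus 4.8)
The plan is to deduce the inequality directly from Theorem \ref{thm:GB} together with the Gauss–Bonnet theorem for the orbifold $\Sigma = \Gamma\backslash\D$. First I would dispose of the degenerate case: if $\rho(\Gamma)$ has a finite orbit in $S^1$, then $e(\rho)$ can be computed from the action on that finite orbit and one checks directly that $|e(\rho)|$ is bounded by $-\eo(\Sigma)$ (in fact such an action is semiconjugate to a rotation action through a finite cyclic quotient, forcing $e(\rho)$ to lie in a range even smaller than $-\eo(\Sigma)$, which is positive since $\Gamma$ is a lattice in $\PSL$ and hence $\eo(\Sigma) < 0$). This reduction is needed because Theorem \ref{thm:GB} is stated under the hypothesis that $\rho(\Gamma)$ has no finite orbit.

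In the main case, with no finite orbit, Theorem \ref{thm:GB} produces a harmonic measure $\mu$ whose associated connection has weak curvature $K\,\vol$ with $|K|\le 1$ pointwise (a.e.), and gives the identity
\[
e(\rho) = \frac{1}{2\pi}\int_{\Sigma} K(z)\,\vol(dz).
\]
Taking absolute values and applying the bound $|K|\le 1$,
\[
|e(\rho)| \le \frac{1}{2\pi}\int_{\Sigma} |K(z)|\,\vol(dz) \le \frac{1}{2\pi}\int_{\Sigma}\vol(dz) = \frac{\vol(\Sigma)}{2\pi}.
\]
Then I invoke the Gauss–Bonnet formula for the hyperbolic orbifold $\Sigma$: since the hyperbolic metric has constant curvature $-1$, the Satake orbifold Euler characteristic satisfies $\eo(\Sigma) = -\frac{1}{2\pi}\vol(\Sigma)$, equivalently $\vol(\Sigma) = -2\pi\,\eo(\Sigma) = 2\pi|\eo(\Sigma)|$. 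Substituting, $|e(\rho)| \le -\eo(\Sigma)$, which is \eqref{eq:MW}.

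I do not expect a serious obstacle here: the substance of the argument is entirely carried by Theorem \ref{thm:GB}, and the remaining ingredients (Gauss–Bonnet for hyperbolic orbifolds in Satake's normalization, and the finite-orbit reduction) are standard. The only point requiring a little care is the finite-orbit case, where one must make sure the chosen extension of the definition of $e(\rho)$ (via bounded Euler classes, as recalled before Proposition \ref{prop:erho}) indeed takes a value consistent with the inequality; this follows because a finite orbit of cardinality $n$ yields a homomorphism $\Gamma \to \mathbb{Z}/n\mathbb{Z}$ up to semiconjugacy, whose Euler number is computed from the corresponding central extension and is bounded in absolute value by $|\eo(\Sigma)|$ for any lattice $\Gamma$ in $\PSL$.
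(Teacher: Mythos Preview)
Your main argument is exactly the paper's intended one: the paper simply declares Corollary \ref{cor:MW} ``a direct consequence'' of Theorem \ref{thm:GB}, and the deduction you wrote---bound $|K|\le 1$ inside the integral and invoke Satake's Gauss--Bonnet $\vol(\Sigma)=-2\pi\,\eo(\Sigma)$---is precisely that consequence.

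One refinement: your handling of the finite-orbit case is correct but can be made cleaner. If $\rho(\Gamma)$ has a finite orbit, then (after collapsing complementary arcs) $\rho$ is semiconjugate to an action factoring through a finite cyclic group; since semiconjugacy preserves the bounded Euler class and $H^2_b$ of a finite group with real coefficients vanishes (Trauber, already cited in the paper), one gets $e(\rho)=0$ outright, not merely a bound. This replaces the somewhat vague appeal to ``the corresponding central extension.''
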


Furthermore, with Theorem \ref{thm:GB}, we obtain the following rigidity for harmonic measures for lattice actions with maximal Euler numbers. 

\begin{thm}\label{thm:Poi}
Let $\G$ be a lattice in $\PSL$ and $\rho : \G \to \Homeo$ a homomorphism. If we have $e(\rho) = \eo(\Sigma)$, then the pull back of every harmonic measure on the suspension bundle of $\rho$ to the universal cover $\D \times \R$ is of the form 
\[
\frac{1-\lvert z \rvert^2}{\lvert \m(e^{\sqrt{-1}t})-z \rvert^2} \vol(z) \nu (t) \quad (z \in \D, t \in \R),
\]
where $\vol$ is the leafwise hyperbolic volume form, for a Borel measure $\nu$ on $\R$ and a continuous monotone mapping $\m : S^1 \to S^1$ of degree one which is $(\rho,\rho_0)$-equivariant, namely, $\m \circ \rho(\gamma) = \rho_0 (\gamma) \circ \m$ holds for any $\gamma \in \G$. 
\end{thm}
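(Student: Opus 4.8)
The plan is to turn the maximality hypothesis $e(\rho)=\eo(\Sigma)$ into the pointwise statement $K\equiv-1$ via Theorem~\ref{thm:GB} together with the Gauss--Bonnet formula, and then to read off the rigid form of the harmonic measure from the equality case in the bound $|K|\le1$. First one should dispose of finite orbits: if $\rho(\Gamma)$ had a finite orbit, a finite-index subgroup $\Gamma_0\le\Gamma$ would fix it pointwise, so $\rho|_{\Gamma_0}$ would lift to $\tHomeo$ and have vanishing Euler number, whence $e(\rho)=0$ by multiplicativity of the Euler number under finite covers; since $\eo(\Sigma)<0$ this contradicts $e(\rho)=\eo(\Sigma)$. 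Thus $\rho(\Gamma)$ has no finite orbit, Theorem~\ref{thm:GB} applies, and $e(\rho)=\frac{1}{2\pi}\int_\Sigma K\,\vol$ with $-1\le K\le1$. As $\Sigma$ is a finite-area hyperbolic orbifold, the orbifold Gauss--Bonnet formula gives $\int_\Sigma\vol=-2\pi\,\eo(\Sigma)$, so $e(\rho)=\eo(\Sigma)$ reads $\int_\Sigma(1+K)\,\vol=0$; since $1+K\ge0$ this forces $K\equiv-1$ for $\vol$-almost every point, i.e.\ $\overline\omega$ has curvature exactly $-\vol$.

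Next, pull everything back to the universal cover $\D\times S^1$, where the leaves of the suspension foliation are the slices $\D\times\{\zeta\}$ and the pulled-back harmonic measure disintegrates over the fibre as $\mu=\int_{S^1}h_\zeta(z)\,\vol(z)\,d\nu(\zeta)$, with each $h_\zeta\ge0$ leafwise harmonic (normalised by $h_\zeta(0)=1$, the normalisation absorbed into $\nu$) and $\nu$ a Borel measure on the fibre. By the Herglotz--Riesz representation, $h_\zeta(z)=\int_{\partial\D}\frac{1-|z|^2}{|\xi-z|^2}\,d\sigma_\zeta(\xi)$ for a unique Borel probability measure $\sigma_\zeta$ on $\partial\D$, and the conclusion of the theorem amounts exactly to saying that $\sigma_\zeta=\delta_{\m(\zeta)}$ for $\nu$-almost every $\zeta$: lifting the fibre circle to $\R$, this is precisely the asserted form. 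So the heart of the matter is to extract these Dirac masses from $K\equiv-1$.

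For this I would use the explicit expression for the curvature density obtained in the course of proving Theorem~\ref{thm:GB}, which exhibits $K(z)$ as an average over the fibre --- against the conditional measure $m_z$ of $\mu$ on $\{z\}\times S^1$ --- of a quantity bounded below by $-1$, with equality exactly when the leafwise harmonic density through the corresponding point is a single Poisson kernel. Conceptually this is a strict-convexity (Jensen) phenomenon for the Herglotz cone: among Borel-probability weightings $\sigma$ of the Poisson kernels, the relevant curvature-type quantity attached to $z\mapsto\int\frac{1-|z|^2}{|\xi-z|^2}\,d\sigma(\xi)$ is extremal precisely at the point masses. Since $K\equiv-1$ for $\vol$-almost every $z$, this defect vanishes for $\vol$-almost every $z$, and by the disintegration of $\mu$ and Fubini it propagates to the statement that $\sigma_\zeta$ is a Dirac mass for $\nu$-almost every $\zeta$, defining a measurable map $\m$ by $\sigma_\zeta=\delta_{\m(\zeta)}$. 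Equivariance of $\m$ is then immediate from $\Gamma$-invariance of $\mu$: it forces $h_{\rho(\gamma)\zeta}(\gamma z)$ to be a $z$-independent positive multiple of $h_\zeta(z)$, and since for $\gamma\in\Gamma\subset\PSL$ the quantity $\frac{1-|\gamma z|^2}{|\gamma\xi-\gamma z|^2}$ is a $z$-independent multiple of $\frac{1-|z|^2}{|\xi-z|^2}$, comparing the now unique concentration points yields $\m\circ\rho(\gamma)=\rho_0(\gamma)\circ\m$; in particular $(z,\zeta)\mapsto(z,\m(\zeta))$ pushes $\mu$ forward to a harmonic measure of the Fuchsian suspension, so $\m$ semiconjugates $\rho$ to $\rho_0$.

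It remains to see that $\m$ is, after modification on a null set, a continuous monotone circle map of degree one. Weak monotonicity and degree one come from the sign of the curvature: a single-Poisson-kernel family whose boundary point traversed the fibre circle backwards would give the connection $K\equiv+1$ of the conjugate Fuchsian suspension (with $e=-\eo(\Sigma)$), not $K\equiv-1$. One then fills in the at most countably many jumps and intervals of constancy of $\m$ --- equivalently, passes to the continuous monotone representative provided by Ghys' characterisation of semiconjugacy via the bounded Euler class --- to obtain the desired continuous monotone degree-one circle map, still $(\rho,\rho_0)$-equivariant and compatible with the density identity. I expect the genuine obstacle to be the preceding step: extracting the pointwise rigidity of the leafwise densities from the merely weak identity $K\equiv-1$ for a connection form that is only continuous, which hinges on the precise curvature formula behind Theorem~\ref{thm:GB} and a careful equality-case analysis, with extra care needed for atoms of $m_z$ and of $\nu$. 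The cusps and cone points of $\Sigma$, by contrast, cause no trouble, since the entire argument is carried out on the universal cover $\D$.
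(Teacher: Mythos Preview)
Your high-level strategy---deduce $K\equiv-1$ from $e(\rho)=\eo(\Sigma)$ and then read off the Poisson-kernel form from the equality case of $|K|\le1$---is exactly the paper's, and your treatment of the finite-orbit case is a useful addition (the paper's proof implicitly relies on Theorem~\ref{thm:GB}, which assumes no finite orbit). The gap is in the mechanism by which $K\equiv-1$ forces the leafwise densities to be single Poisson kernels.

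Your description of $K(z)$ as ``an average over the fibre of a quantity bounded below by $-1$, with equality exactly when the leafwise density is a Poisson kernel'' is not how the bound $|K|\le1$ is actually obtained, and a Jensen/convexity argument over the Herglotz cone does not obviously do the job. In the paper, one first collapses the complement of $\operatorname{supp}\mu$ by a continuous monotone degree-one map $\psi:S^1\to S^1$ so that the transverse measure becomes Lebesgue; only after this step does the curvature admit the explicit formula \eqref{K_eq}. That formula identifies $K(z)$ with the \emph{signed area} enclosed by a closed Lipschitz curve $\theta\mapsto(\omega_1(z,\theta),\omega_2(z,\theta))$ in $\R^2$, whose tangent vector is $\nabla\log h(z,\tau(z,\theta))$. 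The bound $|K|\le1$ then comes from two distinct inequalities applied in succession: the isoperimetric inequality bounds the area by the perimeter squared, and Harnack's inequality $\|d\log h\|\le1$ bounds the perimeter by $2\pi$. Equality in Harnack is what forces each $h(\cdot,t)$ to be a Poisson kernel $\frac{1-|z|^2}{|m(e^{it})-z|^2}$ (this is \cite[Lemma 2.3]{AMN}); but one also needs equality in the isoperimetric inequality, which forces the curve to be a \emph{round circle}, and this is what pins down $m$ as a rotation $e^{it}\mapsto e^{i(t+t_0)}$.

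This last point is also where your argument for continuity, monotonicity and degree one goes wrong. These properties are not obtained by a separate orientation argument (``backwards traversal would give $K\equiv+1$''), but fall out automatically: since $m$ is a rotation and $\psi$ is already continuous monotone of degree one by construction, the composite $\m(e^{it})=e^{i(\tilde\psi(t)+t_0)}$ inherits all three properties at once. Without the collapsing step and the isoperimetric/Harnack factorisation you have no handle on $\m$ beyond measurability.
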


By this result, we can see that given lattice actions with maximal Euler numbers is semi-conjugated to the Fuchsian action by the mapping $\m$, which shows the following generalization of rigidity theorems of Matsumoto and Burger--Iozzi--Wienhard.

\begin{cor}\label{cor:M}
For a lattice $\G$ in $\PSL$ and a homomorphism $\rho : \G \to \Homeo$, we have $e(\rho) = \eo(\Sigma)$ $($resp.\ $e(\rho) = -\eo(\Sigma)$$)$ if and only if $\rho$ is semiconjugate to $\rho_0$ $($resp.\ $\overline{\rho}_{0}$$)$, where $\rho_{0}$ is the Fuchsian action of $\G$ induced from the inclusion $\G \to \PSL \to \Homeo$, and $\overline{\rho}_{0}$ is its complex conjugate. 
\end{cor}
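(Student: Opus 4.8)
The plan is to derive the corollary from Theorem \ref{thm:Poi} by extracting the semiconjugacy from the special form of the harmonic measure, and to handle the ``only if'' direction for $e(\rho)=-\eo(\Sigma)$ by passing to the complex conjugate action. For the ``only if'' direction when $e(\rho)=\eo(\Sigma)$: every continuous foliation on a suspension bundle carries a harmonic measure (existence is standard, cf.\ the discussion preceding Theorem \ref{thm:GB}), so Theorem \ref{thm:Poi} applies and produces a continuous degree-one monotone map $\m\colon S^1\to S^1$ with $\m\circ\rho(\gamma)=\rho_0(\gamma)\circ\m$ for all $\gamma\in\G$. By definition this $\m$ is precisely a semiconjugacy from $\rho$ to $\rho_0$ (a monotone degree-one map intertwining the two actions is what ``semiconjugate'' means in the sense of Ghys and Matsumoto), so $\rho$ is semiconjugate to $\rho_0$. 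For the case $e(\rho)=-\eo(\Sigma)$, observe that conjugating by an orientation-reversing homeomorphism $r$ of $S^1$ sends $\rho$ to a homomorphism $\overline{\rho}=r\rho r^{-1}$ with $e(\overline{\rho})=-e(\rho)=\eo(\Sigma)$; applying the previous case gives a semiconjugacy $\m$ from $\overline{\rho}$ to $\rho_0$, hence $r^{-1}\m r$ conjugated appropriately yields a semiconjugacy from $\rho$ to $r^{-1}\rho_0 r=\overline{\rho}_0$.

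For the ``if'' direction, suppose $\rho$ is semiconjugate to $\rho_0$. Semiconjugacy preserves the bounded Euler class in $H^2_b(\G;\Z)$ (this is the key functoriality property of Ghys' bounded Euler class, and is the basis for the definition of $e(\rho)$ recalled in the introduction via Proposition \ref{prop:erho}), so $e(\rho)=e(\rho_0)$. The Fuchsian action $\rho_0$ comes from the inclusion $\G\hookrightarrow\PSL$, whose associated flat $S^1$-bundle is the unit tangent bundle of the orbifold $\Sigma$, so $e(\rho_0)=\eo(\Sigma)$; alternatively this equality follows by feeding the Fuchsian action into Theorem \ref{thm:GB}, where the round rotation-invariant measure gives curvature $K\equiv 1$ and hence $e(\rho_0)=\tfrac{1}{2\pi}\int_\Sigma\vol=\eo(\Sigma)$ by the Gauss--Bonnet formula for the orbifold $\Sigma$. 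Thus $e(\rho)=\eo(\Sigma)$. The case of $\overline{\rho}_0$ is symmetric, giving $e(\rho)=-\eo(\Sigma)$.

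The main obstacle is bookkeeping rather than conceptual: one must be careful that ``semiconjugacy'' is used consistently (a single monotone degree-one map intertwining the actions, not necessarily invertible, and not necessarily conjugacy in both directions), and that the invariance of the bounded Euler class under this weaker notion is correctly invoked — this is exactly where the non-uniform/torsion cases require the Burger--Iozzi--Wienhard style definition rather than the naive one. One should also verify that the complex-conjugation argument is compatible with the definition of $e(\rho)$, i.e.\ that precomposing the representation with the orientation-reversing involution negates the Euler number; this is immediate from the behavior of the bounded Euler class under orientation reversal of $S^1$. With Theorems \ref{thm:GB} and \ref{thm:Poi} in hand, no further analysis is needed.
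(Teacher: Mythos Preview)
Your approach is essentially the paper's: the paper derives Corollary~\ref{cor:M} directly from Theorem~\ref{thm:Poi} by observing that the equivariant monotone degree-one map $\m$ produced there \emph{is} the required semiconjugacy, and treats the ``if'' direction and the complex-conjugate case as routine (invariance of the bounded Euler class under semiconjugacy, and negation of $e(\rho)$ under orientation reversal). Your write-up fills in exactly these details.

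One small slip to fix: in your alternative computation of $e(\rho_0)$ you write that the curvature is $K\equiv 1$ and conclude $e(\rho_0)=\tfrac{1}{2\pi}\int_\Sigma\vol=\eo(\Sigma)$. The right-hand side $\eo(\Sigma)$ is negative while $\tfrac{1}{2\pi}\int_\Sigma\vol$ is positive, so the signs cannot match. For the Fuchsian action the relevant harmonic measure is the one with Poisson-kernel density (not the fiberwise Lebesgue measure), and the associated curvature in the sense of Theorem~\ref{thm:GB} is $K\equiv -1$; then $e(\rho_0)=\tfrac{1}{2\pi}\int_\Sigma(-1)\,\vol=\eo(\Sigma)$ by Satake's Gauss--Bonnet formula. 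Your first justification (via the unit tangent orbibundle) is already enough, so this is only a cosmetic correction.
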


We remark that Minakawa \cite{Minakawa} has already proved a generalization of the Milnor--Wood inequality and Matsumoto's rigidity theorem for uniform lattices of $\PSL$. 
 By Selberg's lemma \cite{Selberg}, any lattice $\G$ has a torsion-free finite index normal subgroup $\Pi$. Minakawa derived his result for uniform lattice $\G$ from Matsumoto's theorem for uniform torsion-free $\Pi$. Similarly we can prove Corollary \ref{cor:M} for general lattice $\G$ by using the result for torsion-free $\Pi$. The proof of Corollary \ref{cor:M} by Theorem \ref{thm:Poi}, therefore, should read as a direct alternative proof for this fact based on foliated harmonic measure. We note that Mann--Wolff \cite{Mann-Wolff} studied the action of the mapping class groups of surfaces on the circle by using the orbifold fundamental groups of some $2$-orbifolds, which is the main object in this article.

For lattices whose quotient orbifold have nonzero genus, we can refine the Milnor--Wood inequality in terms of the Seifert invariants by using a result of Eisenbud--Hirsch--Neumann \cite[Theorem 3.6]{EHN} for transverse foliations on compact Seifert manifolds with boundary. Note that, for lattices with nontrivial torsion, the suspension bundle of its action on $S^1$ may not be a $3$-manifold, but only a Seifert $3$-orbifold (see Section \ref{sec:Eu} for the detail and the Seifert invariants). 

\begin{thm}\label{thm:EHN}
Let $\G$ be a lattice in $\PSL$ with corresponding orbifold $\Sigma := \G \backslash \D$ and $\rho : \G \to \Homeo$ a homomorphism. Let $M$ be the suspension bundle of $\rho$ whose the Seifert invariant of $M$ normalized with respect to $\rho$ is $(g;(1,\beta_0), (\alpha_1,\beta_1), \dots, (\alpha_\ell,\beta_\ell))$. If the topological genus $g$ of $\Sigma$ is not $0$, then we have
\begin{equation}\label{eq:EHN0}
\eo(\Sigma) + \sum_{i=1}^{\ell}\frac{(\alpha_i - 1) - \beta_i}{\alpha_i} \leq e(\rho) \leq - \eo(\Sigma) - \sum_{i=1}^{\ell}\frac{\beta_i - 1}{\alpha_i}.
\end{equation}
\end{thm}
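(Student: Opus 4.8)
The plan is to reduce the statement to the theorem of Eisenbud--Hirsch--Neumann \cite[Theorem 3.6]{EHN} on transverse foliations of compact Seifert $3$-manifolds with boundary, via a cut-and-paste argument on the suspension orbibundle $M = \Sigma \times_\rho S^1$. Since $\rho(\G)$ may well have a finite orbit in $S^1$ (in which case Theorem \ref{thm:GB} does not apply directly), I would first handle that case separately: if $\rho(\G)$ has a finite orbit, then after passing to an iterate one reduces to the case of a global fixed point, $e(\rho)$ can be computed from the rotation numbers of the boundary holonomies via Proposition \ref{prop:erho}, and the inequality \eqref{eq:EHN0} becomes an elementary estimate on these rotation numbers; so the substance is the case where $\rho(\G)$ has no finite orbit and Theorem \ref{thm:GB} is available.

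Assume then $\rho(\G)$ has no finite orbit, and write the normalized Seifert invariant as $(g;(1,\beta_0),(\alpha_1,\beta_1),\dots,(\alpha_\ell,\beta_\ell))$ with $g\neq 0$. First I would remove from $\Sigma$ small open orbifold-disk neighborhoods of the $\ell$ cone points (and, if present, of the cusps), obtaining a compact surface $\Sigma_0$ of genus $g$ with $\ell$ boundary circles (plus possibly cusp boundary tori to be capped or handled as in \cite{BIW}); the restriction of the suspension foliation $\F$ to $M_0 := \Sigma_0 \times_\rho S^1$ is a foliation transverse to the $S^1$-fibers of a genuine compact Seifert $3$-manifold with boundary. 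Because $\rho(\G)$ has no finite orbit, the leaves of $\F|_{M_0}$ are $\pi_1$-injective and, crucially, there is no leaf that is a multiple section or a boundary-parallel torus, so the foliation is taut enough that \cite[Theorem 3.6]{EHN} applies: it gives two-sided bounds on the Euler number of a transverse foliation in terms of the genus, the boundary data, and the Seifert invariants $(\alpha_i,\beta_i)$. The numbers $(\alpha_i-1)-\beta_i)/\alpha_i$ and $(\beta_i-1)/\alpha_i$ in \eqref{eq:EHN0} are exactly the contributions that EHN attach to each exceptional fiber, read off from the rotation numbers of the holonomy of $\F$ around the deleted cone disks; here one must match the sign and normalization conventions of \cite{EHN} with the convention for $e(\rho)$ fixed in Proposition \ref{prop:erho}.

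The remaining point is to identify the Euler number $e(\rho)$ of the orbifold action with the Euler number of the transverse foliation on the Seifert piece that EHN bound, i.e.\ to show that deleting the cone points and recording the boundary holonomy rotation numbers changes the relevant Euler-type quantity by precisely $\sum_i$ (cone-point correction). This is a bookkeeping computation in the spirit of the Seifert-invariant description of the Euler class: the orbifold Euler number decomposes as the Euler number of $M_0 \to \Sigma_0$ relative to the chosen boundary trivializations, plus a sum of local terms $\beta_i/\alpha_i$ (and the $(1,\beta_0)$ term), while the foliated Euler number decomposes analogously with the boundary rotation numbers in place of the $\beta_i/\alpha_i$; combining the two decompositions with the constraint $0 \le (\text{rotation number at the }i\text{-th boundary}) \le 1$ coming from the fact that the holonomy is a homeomorphism of $S^1$ of a definite degree yields \eqref{eq:EHN0}. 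I expect this matching of conventions and the verification that EHN's hypotheses (no closed leaf giving a section, appropriate behavior on the boundary tori) genuinely hold for $\F|_{M_0}$ under the assumption that $\rho(\G)$ has no finite orbit to be the main obstacle; the hypothesis $g\neq 0$ is what guarantees the Seifert base has enough topology for \cite[Theorem 3.6]{EHN} to be non-vacuous.
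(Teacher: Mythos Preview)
Your overall strategy---cut out neighbourhoods of the singular fibers and cusps to obtain a compact Seifert $3$-manifold with boundary and invoke \cite[Theorem~3.6]{EHN}---is exactly what the paper does. However, several parts of your proposal are misdirected, and one essential step is missing.

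First, the case split on whether $\rho(\G)$ has a finite orbit is unnecessary, and Theorem~\ref{thm:GB} plays no role in this argument. The suspension foliation is transverse to the Seifert fibers by construction regardless of the orbit structure of $\rho$, and transversality is the only hypothesis \cite[Theorem~3.6]{EHN} requires; there is no tautness, $\pi_1$-injectivity, or ``no section leaf'' condition to verify. The paper's proof works uniformly and never appeals to the Gauss--Bonnet formula.

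Second, and more seriously, you gloss over the step that actually does the work. What \cite[Theorem~3.6]{EHN} produces is an inequality of the shape
\[
2-2g-\ell - \sum_{j=1}^{m} \lceil \underline{m}h_j \rceil \ \le\ \beta_0 \ \le\ 2g-2 - \sum_{j=1}^{m} \lfloor \overline{m}h_j \rfloor,
\]
where $h_j \in \tHomeo$ is the chosen lift of the $j$-th cusp holonomy and $\overline{m}h = \max_x(h(x)-x)$, $\underline{m}h = \min_x(h(x)-x)$. These displacement extrema are \emph{not} the translation numbers $\tau(h_j)$ that enter the formula for $e(\rho)$ in Proposition~\ref{prop:erho}, and your vague ``constraint $0\le (\text{rotation number})\le 1$'' does not bridge the two. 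The paper isolates precisely this bridge as a separate lemma: for any $h \in \tHomeo$ one has
\[
\lceil \overline{m}h\rceil - 1 \ \le\ \tau(h) \ \le\ \lfloor \underline{m}h\rfloor + 1,
\]
proved by an intermediate-value argument. Only after applying this lemma do the EHN bounds on $\beta_0$ combine with $e(\rho) = -\beta_0 - \sum_i \beta_i/\alpha_i - \sum_j \tau(h_j)$ and the standard formula $\eo(\Sigma) = 2-2g-m-\sum_i(\alpha_i-1)/\alpha_i$ to yield~\eqref{eq:EHN0}. Without this lemma your bookkeeping cannot close.
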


There is a clear relation between the equality conditions of this inequality and the Milnor--Wood inequality as follows:

\begin{thm}\label{thm:EHNeq}
Let $\G$ be a lattice in $\PSL$ with corresponding orbifold $\Sigma := \G \backslash \D$ and $\rho : \G \to \Homeo$ a homomorphism. We have $e(\rho) = \eo(\Sigma)$ if and only if we have the equality in the left inequality in \eqref{eq:EHN0} and $\beta_i = \alpha_i - 1$ for every $i = 1, \dots, \ell$. We have $e(\rho) = -\eo(\Sigma)$ if and only if we have the equality in the right inequality in \eqref{eq:EHN0} and $\beta_i = 1$ for every $i = 1, \dots, \ell$. 
\end{thm}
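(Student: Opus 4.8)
The plan is to combine Theorem~\ref{thm:EHN} with the Milnor--Wood inequality of Corollary~\ref{cor:MW} and the rigidity description of Theorem~\ref{thm:Poi}. The two inequalities in \eqref{eq:EHN0} should be read as refinements of \eqref{eq:MW}: for the right-hand side we have $e(\rho) \leq -\eo(\Sigma) - \sum_i \frac{\beta_i-1}{\alpha_i}$, and since $\alpha_i \geq 1$ and (after normalization) $0 \leq \beta_i \leq \alpha_i-1$, the correction term $\sum_i \frac{\beta_i - 1}{\alpha_i}$ is nonnegative precisely when every $\beta_i \geq 1$. So I would first record the elementary observation that, under the normalization of the Seifert invariant with respect to $\rho$, each fractional correction term is nonnegative, with equality iff the corresponding $\beta_i$ takes the extreme value ($\beta_i = 1$ on the right, $\beta_i = \alpha_i - 1$ on the left). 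This reduces the theorem to a purely bookkeeping comparison once the key input is in place.

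The key input is: if $e(\rho) = -\eo(\Sigma)$ then automatically $\beta_i = 1$ for all $i$. Granting this, \eqref{eq:EHN0} forces $-\eo(\Sigma) = e(\rho) \leq -\eo(\Sigma) - \sum_i \frac{\beta_i-1}{\alpha_i} = -\eo(\Sigma)$, so the right inequality is an equality; conversely if the right inequality is an equality and all $\beta_i = 1$ then $e(\rho) = -\eo(\Sigma)$ directly. The symmetric statement for $e(\rho) = \eo(\Sigma)$ follows by applying the same argument to the complex-conjugate action $\overline\rho$, whose Euler number is $-e(\rho)$ and whose normalized Seifert invariant replaces each $\beta_i$ by $\alpha_i - \beta_i$ (I would check this sign/normalization behavior explicitly, as it is the one place where a convention could bite).

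The heart of the matter, therefore, is to show that maximality of the Euler number pins down the local Seifert data $\beta_i$. Here I would invoke Theorem~\ref{thm:Poi}: when $e(\rho) = \eo(\Sigma)$ (or $-\eo(\Sigma)$ after conjugation) the action is semiconjugate via the monotone degree-one map $\m$ to the Fuchsian action $\rho_0$, and $\m$ intertwines $\rho$ with $\rho_0$. The Seifert invariant of the suspension bundle is computed from the local rotation data of $\rho$ at the cone points of $\Sigma$ (equivalently, from the translation numbers of lifts of the elliptic generators $x_i \in \G$ to $\tHomeo$, via Proposition~\ref{prop:erho}). Since $\m$ is an equivariant semiconjugacy and translation number is a semiconjugacy invariant, the translation number of any lift of $\rho(x_i)$ equals that of the corresponding lift of $\rho_0(x_i)$, which is an elliptic rotation of order $\alpha_i$. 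I would then show that for the Fuchsian action the normalized Seifert invariant has exactly $\beta_i = \alpha_i - 1$ (this is essentially the statement that the Fuchsian suspension realizes $e(\rho_0) = \eo(\Sigma)$ together with the left equality in \eqref{eq:EHN0}, which by Theorem~\ref{thm:EHN} forces each $\beta_i$ to its extreme value), and transport this to $\rho$ via the equality of translation numbers.

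The main obstacle I anticipate is the last transport step: a semiconjugacy controls translation numbers of individual elements but the normalized Seifert invariant $\beta_i$ is extracted from a choice of section and of lifts, so I must verify that the quantity $\beta_i \bmod \alpha_i$ — or the combination actually appearing in \eqref{eq:EHN0} — is genuinely a semiconjugacy invariant and not just an invariant of the full conjugacy class. Concretely, I would express $\beta_i$ through the translation number $\operatorname{rot}$ of a specific lift determined by the global normalization (the condition that ties the $\beta_i$ together with $\beta_0$ so that $\sum$ relates to $e(\rho)$), and check that the semiconjugacy $\m$, being degree one and monotone, lifts to $\tHomeo$ compatibly with these choices so that all the relevant rotation numbers are preserved. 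Once that compatibility is nailed down, the theorem is just the arithmetic of plugging the forced values of $\beta_i$ back into \eqref{eq:EHN0}.
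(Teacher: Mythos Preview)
Your approach would work, but it is far more elaborate than necessary, and the paper's entire proof is already embedded in your own argument as a special case. The paper's proof is two lines: from the normalization of the Seifert invariant one has $1 \leq \beta_i \leq \alpha_i - 1$, so both correction terms $\frac{(\alpha_i-1)-\beta_i}{\alpha_i}$ and $\frac{\beta_i-1}{\alpha_i}$ are nonnegative. Then $e(\rho) = \eo(\Sigma)$ together with the left inequality of \eqref{eq:EHN0} yields $\sum_i \frac{(\alpha_i-1)-\beta_i}{\alpha_i} \leq 0$, forcing every $\beta_i = \alpha_i - 1$ and equality in \eqref{eq:EHN0}; the converse is immediate by substitution. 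The case $e(\rho) = -\eo(\Sigma)$ is symmetric. No harmonic measures, no semiconjugacy, no transport of translation numbers is invoked.

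Your detour through Theorem~\ref{thm:Poi} stems from writing $0 \leq \beta_i \leq \alpha_i - 1$ rather than $1 \leq \beta_i \leq \alpha_i - 1$, which leaves you unsure whether $\frac{\beta_i-1}{\alpha_i} \geq 0$. But observe: even granting only the upper bound $\beta_i \leq \alpha_i - 1$, which you do accept, the $e(\rho) = \eo(\Sigma)$ half follows by exactly the arithmetic above---and this is precisely the computation you carry out in your third paragraph for $\rho_0$. You then propose to get the $e(\rho) = -\eo(\Sigma)$ half by passing to $\overline{\rho}$, which swaps $\beta_i \leftrightarrow \alpha_i - \beta_i$. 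So your own ingredients already give the theorem directly: the semiconjugacy machinery and the worry about whether the normalized $\beta_i$ is a semiconjugacy invariant are simply not needed. What your route would buy, if one distrusts the lower bound $\beta_i \geq 1$ in the orbifold setting, is an independent derivation of that bound at the maximal Euler number via rigidity---but the paper treats it as part of the definition.
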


In the exceptional case where the topological genus of $\Sigma$ is $0$, by \cite[Theorem 3.6]{EHN}, we have
\begin{equation*}
\eo(\Sigma) -1+ \sum_{i=1}^{\ell}\frac{(\alpha_i - 1) - \beta_i}{\alpha_i} \leq e(\rho) \leq - \eo(\Sigma) +1- \sum_{i=1}^{\ell}\frac{\beta_i - 1}{\alpha_i},
\end{equation*}
which can be weaker than the Milnor--Wood inequality.

Let us mention an application of Corollaries \ref{cor:MW} and \ref{cor:M} to transverse foliations on Seifert manifolds $M$ whose base orbifold $\Sigma$ possibly has cusps. There is a well known correspondence between homomorphisms $\pi_1^{\operatorname{orb}}\Sigma \to \Homeo$ and foliations on Seifert manifolds transverse to fibers via suspension bundles. Then we can define the Euler number $e(\F)$ of a transverse foliation $\F$ as the Euler number of the corresponding homomorphism $\pi_1^{\operatorname{orb}}\Sigma \to \Homeo$. Note that, if $\Sigma$ has no cusp, then $e(\F)$ coincides with the Euler number of the Seifert manifold $M$. The following is a rigidity result for Seifert manifolds with transverse foliations, which is equivalent to Corollaries \ref{cor:MW} and \ref{cor:M} via the correspondence.

\begin{cor} 
Let $M$ be a Seifert $3$-orbifold whose singular locus is the union of a finite number of $S^1$-fibers. Assume that the base orbifold $\Sigma$ is an oriented hyperbolic surface of finite volume without boundary that possibly has cusps. 
If $M$ admits a foliation $\F$ transverse to the Seifert fibration, then we have 
\[
|e(\F)| \leq -\eo(\Sigma),
\]
where $e(\F)$ is the Euler number of the action of the orbifold fundamental group of $\Sigma$ on $S^1$ given by $\F$. 

Moreover, the equality holds if and only if there exist a lattice $\G$ in $\PSL$ and a homotopy equivalence $f \colon M \to \G \backslash \PSL$ which monotonely maps each Seifert fiber to a Seifert fiber and whose restriction to each leaf of $\F$ is a diffeomorphism to a leaf of the stable or unstable foliation on $\G \backslash \PSL$. 
\end{cor}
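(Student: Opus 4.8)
The plan is to derive the final corollary directly from Corollaries \ref{cor:MW} and \ref{cor:M} via the standard suspension correspondence, so the bulk of the work is translating the algebraic rigidity statement for $\rho\colon \ofg\Sigma \to \Homeo$ into the stated geometric/topological conclusion about $M$, $\F$, and the model Seifert manifold $\G\backslash\PSL$. First I would recall the suspension correspondence: since $\Sigma$ is a hyperbolic $2$-orbifold of finite volume, fixing an identification $\ofg\Sigma \cong \G \subset \PSL$ realizes $\D$ as the orbifold universal cover; a foliation $\F$ on $M$ transverse to the Seifert fibration determines a holonomy homomorphism $\rho\colon \ofg\Sigma \to \Homeo$ (well defined up to conjugacy in $\Homeo$), and conversely $M$ with $\F$ is orbifold-diffeomorphic to the suspension $\D \times_\rho S^1 = (\D \times S^1)/\ofg\Sigma$ with its horizontal foliation. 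Under this correspondence $e(\F)$ is by definition $e(\rho)$, so the inequality $|e(\F)| \le -\eo(\Sigma)$ is literally Corollary \ref{cor:MW}.

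For the equality case, suppose $e(\F) = -\eo(\Sigma)$ (the case $e(\F) = \eo(\Sigma)$ is identical after reversing the orientation of the fibers, exchanging $\rho_0$ with $\overline\rho_0$). By Corollary \ref{cor:M}, $\rho$ is semiconjugate to $\overline\rho_0$, i.e.\ there is a continuous degree-one monotone map $\m\colon S^1 \to S^1$ with $\m \circ \rho(\gamma) = \overline\rho_0(\gamma)\circ \m$ for all $\gamma$ — and I would note that we may instead use the form coming from Theorem \ref{thm:Poi}, which additionally tells us $\m$ is the boundary map attached to the harmonic measure. The map $\m$ is $\ofg\Sigma$-equivariant from the $\rho$-action to the Fuchsian boundary action, so $\mathrm{id}_\D \times \m$ descends to a fiberwise-monotone map from $\D\times_\rho S^1 = M$ to $\D\times_{\rho_0} S^1$. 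Next I would identify the latter suspension bundle with the model: the unit tangent bundle of $\Sigma$, equivalently the homogeneous Seifert $3$-orbifold $\G\backslash\PSL$, is precisely the suspension of the Fuchsian boundary action $\rho_0$ (the $\PSL$-action on $S^1 = \partial\D$), so $\D\times_{\rho_0}S^1 \cong \G\backslash\PSL$, carrying the weak-stable (or weak-unstable, for $\overline\rho_0$) foliation as its horizontal foliation. Composing gives the desired map $f\colon M \to \G\backslash\PSL$; it sends Seifert fibers to Seifert fibers monotonically because $\m$ has degree one and is monotone, and it is a homotopy equivalence because on $\D\times S^1$ it is $\mathrm{id}\times\m$ with $\m$ of degree one, hence an isomorphism on $\ofg$, and $M$, $\G\backslash\PSL$ are both aspherical (finite-volume geometric Seifert $3$-orbifolds). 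Finally, on each leaf $f$ restricts to $\mathrm{id}_\D$ in the leafwise direction (a leaf of $\F$ lifts to $\D\times\{pt\}$ and maps to $\D\times\{\m(pt)\}$), hence is a diffeomorphism onto a leaf of the stable/unstable foliation of $\G\backslash\PSL$. The converse direction is immediate: such an $f$ semiconjugates $\F$ to the stable or unstable foliation, whose associated action is $\rho_0$ or $\overline\rho_0$, so $e(\F) = \pm\eo(\Sigma)$ by the converse half of Corollary \ref{cor:M}.

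The main obstacle, and the point deserving the most care, is the orbifold bookkeeping: making sure every step — the suspension construction, the descent of $\mathrm{id}\times\m$, the identification with $\G\backslash\PSL$, the ``homotopy equivalence of orbifolds'' — is correctly interpreted when $\G$ has torsion and $M$ is only a Seifert orbifold rather than a manifold, and that the hypothesis ``singular locus a finite union of $S^1$-fibers'' matches the orbifold structure produced by the suspension. One must check that the exceptional fibers of $M$ correspond exactly to the cone points (and cusps behave correctly under $\m$, which is the content already handled in the earlier sections on $e(\rho)$ for non-uniform $\G$), and that monotonicity of $\m$ plus degree one is enough to guarantee $f$ restricted to each singular fiber is the expected branched-type monotone map of circles. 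Since the semiconjugacy $\m$ from Corollary \ref{cor:M} / Theorem \ref{thm:Poi} is genuinely $\ofg\Sigma$-equivariant and not merely conjugating up to a bounded error, this descent goes through, but spelling out the orbifold-diffeomorphism statement ``$f$ restricted to each leaf is a diffeomorphism'' requires recalling that the leafwise hyperbolic structure is preserved on the nose by the construction in Theorem \ref{thm:Poi}.
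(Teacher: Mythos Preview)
Your proposal is correct and takes essentially the same approach as the paper. In fact the paper does not give a separate proof of this corollary at all: it simply states that the result ``is equivalent to Corollaries \ref{cor:MW} and \ref{cor:M} via the correspondence'' between homomorphisms $\ofg\Sigma \to \Homeo$ and transverse foliations on Seifert fibrations, and your write-up fleshes out precisely that translation (including the identification of the suspension of $\rho_0$ with $\G\backslash\PSL$ carrying the stable/unstable foliation, and the construction of $f$ from the semiconjugacy $\m$).
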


The stable (resp.\ unstable) foliation on $\G \backslash \PSL$ is the orbit foliation of the right action of the upper (resp.\  lower) triangular matrix subgroup of $\PSL$. A closed Seifert manifold with hyperbolic base space and nonzero Euler number admits an $\widetilde{SL}(2;\R)$-geometry \cite{Scott}, but it may not be a homogeneous space of $\PSL$ in general. Closed Seifert manifolds which are homogeneous spaces were characterized by \cite{RaymondVasquez}.

\paragraph{Acknowledgments.}
M.A.\ is supported by JSPS KAKENHI Grant Numbers JP19KK0347, JP21H00980, JP21K18579, JP23K20793 and JP24K06776.
 H.N.\ is supported by JSPS KAKENHI Grant Numbers JP20K03620 and JP24K06723.

\paragraph{Convention.}
We use the identification $S^1 = \{ z \in \C \mid |z| = 1 \} = \R / 2\pi\Z$ throughout this paper.

\section{Harmonic measures on the suspension foliation}

Transverse invariant measures are a basic tool to study foliations. However, there are many foliations without such good measures. Garnett \cite{Garnett} introduced a weaker notion, harmonic measures on a foliated manifold, which are measures invariant under the leafwise heat flow. The advantage is that a nontrivial harmonic measure always exists for compact foliated manifolds by Garnett's theorem. We refer to \cite{Garnett,Candel} for the basics of harmonic measures and to \cite{Yue,DK,Adachi2} for their applications. 

Let $\G$ be a lattice of $\PSL$ and $\Sigma :=\G\backslash \D$ the quotient orbifold. For a homomorphism $\rho : \G \to \Homeo$, we can construct its suspension $M := \Sigma \times_\rho S^1 := \G\backslash (\D \times S^1) \to \Sigma$, where $\gamma \cdot (z,t) = (\gamma z,\rho(\gamma)t)$ for $\gamma \in \G, z\in \D, t\in S^1$. We have a canonical map $\pi : M \to \Sigma; [(z,t)] \mapsto [z]$.
Since $\G$ may have torsions, the $\G$-actions on $\D$ and $\D \times S^1$ may not be free but only proper. Therefore, in general, $\Sigma$ and $M$ are not smooth manifolds but orbifolds. Similarly $\pi$ may have multiple fibers. A partition $\F$ on $M$ is induced from the product foliation $\D \times S^1 = \sqcup_{w \in S^1}\D \times \{w\}$. If $\G$ is torsion-free, then $\F$ is the suspension foliation. In general, the image of $\D \times \{w\}$ may be an orbifold, and $\F$ is a foliation by orbifolds, which means that $(M,\F)$ is covered by an orbifold atlas consisting of foliated charts with finite group actions and compatible foliations.
Smooth functions on $\Sigma$ and $M$ are identified with $\G$-invariant smooth functions on $\D$ and $\D \times S^1$, respectively. $C^k$ functions along the leaves of $\F$ are identified with $\G$-invariant continuous functions on $\D \times S^1$ such that its restriction to $\D \times \{w\}$ is $C^k$ for each $w \in S^1$. Differential forms, Riemannian metrics and Laplacians on these orbifolds are considered in a similar way by using the $\G$-actions on the corresponding spaces. Since $\G$ preserves the Poincar\'e metric on $\D$, the leaves of $\F$ admits a natural Riemannian metric $g$ of constant curvature $-1$. Let us denote the Laplacian associated with $g$ by $\Delta$.

The following is the main tool in this article.
\begin{defn}
A Borel measure $\mu$ on $M$ is called \emph{harmonic} if 
\[
\int_M \Delta f(x) \mu(dx) = 0 
\]
for every compactly supported $C^2$ function $f$ along the leaves of $\F$ such that $\Delta f$ is continuous on $M$.
\end{defn}

Alvarez \cite{Alvarez} proved the existence of a harmonic measure for the suspension bundle of a $\G$-action for torsion-free $\G$ even if the total space may not be compact. Thanks to the Selberg's lemma \cite{Selberg}, a general lattice of $\PSL$ has a torsion-free normal subgroup of finite index. Therefore, by using a finite covering of $M$, we get the following by the result of Alvarez:

\begin{lem}\label{lem:Alvarez}
There exists a harmonic probability measure $\mu$ on the suspension foliation $(M,\F)$.
\end{lem}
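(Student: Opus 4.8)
The plan is to reduce to the torsion-free case already settled by Alvarez, transporting a harmonic measure along a finite orbifold covering. First I would invoke Selberg's lemma \cite{Selberg} to fix a torsion-free normal subgroup $\Pi \leq \G$ of finite index, with finite quotient $Q := \G / \Pi$. Since $\Pi$ is a torsion-free lattice in $\PSL$, it acts freely and properly discontinuously on $\D$, hence on $\D \times S^1$, so the suspension $M' := \Pi \backslash (\D \times S^1)$ of the restricted homomorphism $\rho|_\Pi \colon \Pi \to \Homeo$ is a genuine (possibly non-compact) foliated $3$-manifold $(M', \F')$, and one has $M = Q \backslash M'$ with quotient map $p \colon M' \to M$, a finite covering compatible with the foliations in the orbifold sense. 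Because $\G$ preserves the Poincar\'e metric on $\D$, the leafwise metric on $M'$ is the $p$-pullback of the one on $M$, so $p$ restricts to a local isometry on each leaf and intertwines the leafwise Laplacians: $\Delta (f \circ p) = (\Delta f) \circ p$.

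Next I would apply Alvarez's theorem \cite{Alvarez} to the suspension bundle $M' \to \Sigma' := \Pi \backslash \D$ of the torsion-free lattice $\Pi$; this is precisely the situation Alvarez treats, with no compactness hypothesis required, and it produces a harmonic probability measure $\mu'$ on $(M', \F')$. I would then set $\mu := p_* \mu'$, the pushforward of $\mu'$ under the finite covering, which is again a Borel probability measure on $M$ since pushforward preserves total mass.

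It then remains to check that $\mu$ is harmonic on $(M, \F)$. Given a compactly supported $C^2$ function $f$ along the leaves of $\F$ with $\Delta f$ continuous, the function $f \circ p$ is $Q$-invariant, leafwise $C^2$, supported on the compact set $p^{-1}(\operatorname{supp} f)$ (here properness of the finite covering $p$ is used), and satisfies $\Delta(f \circ p) = (\Delta f) \circ p$, which is continuous. Hence, by the change-of-variables formula and the harmonicity of $\mu'$,
\[
\int_M \Delta f \, d\mu = \int_{M'} (\Delta f) \circ p \, d\mu' = \int_{M'} \Delta(f \circ p) \, d\mu' = 0,
\]
so $\mu$ is harmonic. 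I do not expect a genuine obstacle here: the only point needing care is the bookkeeping of metrics, foliations and Laplacians under the finite orbifold covering $p$, and this is routine given the locality of $\Delta$ and the properness of $p$. The substantive input — existence of a harmonic measure for a possibly non-compact suspension — is entirely supplied by Alvarez's theorem, which is why the excerpt cites it.
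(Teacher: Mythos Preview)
Your proposal is correct and follows essentially the same approach as the paper: the paper's argument (given in the paragraph immediately preceding the lemma rather than as a formal proof) is precisely to invoke Selberg's lemma to pass to a torsion-free finite-index normal subgroup, apply Alvarez's theorem on the resulting finite cover, and then descend. You have simply spelled out the descent step (pushforward along the finite orbifold covering and the verification that $\Delta(f\circ p)=(\Delta f)\circ p$) in more detail than the paper does.
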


By the disintegration formula due to \cite{
Garnett}, the pull back of $\mu$ to $\D \times \R$ is of the form 
\begin{equation}\label{eq:disint}
\tilde{\mu} = h(z,t) \vol(z) \nu (t),
\end{equation}
where $\vol$ is the leafwise volume measure on $\D$, for some Borel measurable function $h(z,t)$ whose restriction to $\D \times \{t\}$ is a positive harmonic function for $\nu$-a.e.\ $t$ and $\nu$ is a Borel measure on $\R$.

\section{The Euler number of actions of lattices on $S^1$} \label{sec:def_euler}
\subsection{Definition of the Euler number}
Let $\G$ be a lattice of $\PSL$, and $\rho : \G \to \Homeo$ a homomorphism. In the case where $\G$ is uniform, it is classical to define the Euler number of $\rho$ by the pairing of the Euler class and the fundamental class of $\G\backslash \D$. Burger--Iozzi--Wienhard \cite{BIW} defined the Euler number based on bounded cohomology in the case where $\G$ is non-uniform and torsion-free, i.e., $\G\backslash \D$ is a hyperbolic surface with cusps. Let us extend their definition to the case where $\G$ is a general lattice. 

By Selberg's lemma \cite{Selberg}, there exists a finite index torsion-free normal subgroup $\Pi$ of $\G$. Let $\Sigma := \G\backslash \D$. Let $H_b(\cdot)$ denote the bounded cohomology with real coefficient of a topological space or a discrete group. First let us explain that so-called Gromov isomorphism for $\G$ follows from that for $\Pi$ \cite{Gromov}.

\begin{lem}
We have an isomorphism
\begin{equation}\label{eq:Gromov}
H_b^{k}(\G) \cong H_b^{k}(\Sigma), \quad\quad \forall k \geq 0.
\end{equation}
\end{lem}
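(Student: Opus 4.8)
The plan is to deduce the isomorphism for $\G$ from the known statement for the torsion-free finite-index normal subgroup $\Pi$ by a transfer argument. First I would observe that $\Sigma' := \Pi \backslash \D$ is an honest aspherical hyperbolic surface (with cusps), so the classical Gromov mapping theorem gives $H_b^k(\Pi) \cong H_b^k(\pi_1 \Sigma') \cong H_b^k(\Sigma')$ for all $k$. Here I want to use that for aspherical spaces bounded cohomology of the space agrees with that of its fundamental group, together with the fact that for an open surface $\Sigma'$ one still has a classifying space which is a $K(\Pi,1)$. The quotient $Q := \G / \Pi$ is a finite group acting on $\Sigma'$ with quotient orbifold $\Sigma = \G \backslash \D$, and acting on $\D$ compatibly, so $Q$ acts on everything in sight.

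Next I would set up the transfer. Since $Q$ is finite, for any $\Q$- (hence $\R$-)vector space with $Q$-action the averaging operator $\frac{1}{|Q|}\sum_{q \in Q} q^*$ is a projection onto the invariants, and taking $Q$-invariants is exact on $\R[Q]$-modules. Applying this to the short exact sequence of groups $1 \to \Pi \to \G \to Q \to 1$: the Lyndon--Hochschild--Serre type spectral sequence in bounded cohomology (or more directly, the identification $H_b^k(\G) \cong H_b^k(\Pi)^{Q}$, which holds because $\Pi$ has finite index in $\G$ — this is the transfer isomorphism for bounded cohomology, valid since one can average bounded cochains) yields
\[
H_b^k(\G) \cong H_b^k(\Pi)^{Q}.
\]
On the topological side, I would likewise use that the finite covering $\Sigma' \to \Sigma$ of orbifolds induces, via transfer, $H_b^k(\Sigma) \cong H_b^k(\Sigma')^{Q}$; here bounded cohomology of the orbifold $\Sigma$ is defined as the bounded cohomology of its classifying space $B\G = B\ofg\Sigma$, or equivalently one simply \emph{defines} $H_b^k(\Sigma) := H_b^k(\G)$ and the content is entirely the statement for $\Pi$. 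Combining with the Gromov isomorphism for $\Pi$, which is $Q$-equivariant since it is natural, gives
\[
H_b^k(\G) \cong H_b^k(\Pi)^{Q} \cong H_b^k(\Sigma')^{Q} \cong H_b^k(\Sigma).
\]

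The main obstacle I anticipate is making the transfer isomorphism $H_b^k(\G) \cong H_b^k(\Pi)^Q$ precise in the bounded setting: one must check that the averaging operator is well-defined on the level of bounded cochains (it is, since a finite average of bounded cochains is bounded, with norm control), that it is a chain map, and that the composition with restriction recovers the identity up to the usual $[\G:\Pi]$ factor — and crucially that this all descends to cohomology without norm blow-up. A secondary point is to pin down what $H_b^k(\Sigma)$ means for an orbifold; the cleanest route is to take it by definition to be $H_b^k(\ofg\Sigma) = H_b^k(\G)$, in which case the lemma is essentially tautological once the group-level statement is in hand, but if instead one wants a genuinely topological model (e.g. the Borel construction $E\G \times_\G \D$), then one needs that this space is aspherical with fundamental group $\G$, which it is. I would write the proof using the group-theoretic definition and remark on the topological interpretation, so that the real work reduces to the transfer argument plus citing Gromov's theorem for the torsion-free surface group $\Pi$.
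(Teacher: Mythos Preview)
Your proposal is correct and follows essentially the same route as the paper: reduce to the torsion-free subgroup $\Pi$ via $H_b^k(\G) \cong H_b^k(\Pi)^{\G/\Pi}$ (the paper obtains this from the Lyndon--Hochschild--Serre spectral sequence together with Trauber's vanishing of bounded cohomology for the finite group $\G/\Pi$, which is equivalent to your transfer/averaging argument), then invoke the Gromov/Ivanov isomorphism for $\Pi$ and pass to $\G/\Pi$-invariants on the topological side. One clarification worth making: in the paper $H_b^k(\Sigma)$ means bounded singular cohomology of the underlying topological space $\Sigma = \G\backslash\D$, identified via Ivanov's complex $B^\bullet(\D)$, so you should not take the shortcut of \emph{defining} $H_b^k(\Sigma) := H_b^k(\G)$, which would make the lemma vacuous.
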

\begin{proof} 
Since $\G/\Pi$ is a finite group, its bounded cohomology with any coefficient module vanish by a result of Trauber (see \cite{Gromov}). Then the Lyndon--Hochschild--Serre spectral sequence \cite{Lyndon,HS} for bounded cohomology for the sequence $1 \to \Pi \to \G \to \G/\Pi \to 1$, 
\[
H_b^{p}(\G/\Pi,H^{q}_b(\Pi))\Longrightarrow H^{p+q}_b(\G)
\]
collapses at the $E^2$-level, which implies
\[  
H_b^{k}(\G) \cong H^{0}_b(\G/\Pi,H^{k}_b(\Pi)) \cong H^{k}_b(\Pi)^{\G/\Pi}.
\]
Let $B^{\bullet}(\cdot)$ be the complex of bounded singular cochains on a topological space. By a result of Ivanov \cite{Ivanov}, we have $H_b^{k}(\Pi) \cong H^{k}(B^{\bullet}(\D)^{\Pi})$ for every $k \geq 0$, where $B^{\bullet}(\D)^{\Pi}$ denotes the subspace of $\Pi$-invariant elements. Therefore, since $\Pi$ is torsion-free and $\G/\Pi$ is a finite group, we have
\[
H^{k}_b(\Pi)^{\G/\Pi} \cong  H^{k}((B^{\bullet}(\D)^{\Pi})^{\G/\Pi}) \cong H^{k}(B^{\bullet}(\Pi\backslash \D)^{\G/\Pi}) \cong H^{k}_b(\Sigma),
\]
which implies \eqref{eq:Gromov}.
\end{proof}

Let us define the Euler number of a homomorphism $\rho : \G \to \Homeo$ following \cite{BIW}.
We cut off all cusps of $\Sigma$ and obtain a compact orbifold $\Sigma'$ with boundary $\partial\Sigma'$. We can pull back the universal Euler class $\eu \in H_b^{2}(\Homeo)$ by $\rho$ to have $\rho^*\eu \in H_b^{2}(\G)$, which we regard as an element of $H_b^{2}(\Sigma')$ via the isomorphism \eqref{eq:Gromov}. Since $H_b^{1}(\partial \Sigma')\cong H_b^{2}(\partial \Sigma')\cong 0$, by the relative exact sequence, we have an isomorphism
\[
f : H_b^{2}(\Sigma', \partial \Sigma') \to H_b^{2}(\Sigma').  
\]
\begin{defn}
We define the Euler number $e (\rho) \in \R$ of $\rho$ by
\[
e (\rho) = \langle f^{-1}\rho^*\eu, [\Sigma',\partial \Sigma'] \rangle,
\]
where $[\Sigma',\partial \Sigma']$ is the fundamental class of $(\Sigma',\partial \Sigma')$.
\end{defn}

By the formula due to Burger--Iozzi--Wienhard \cite{BIW}, if $\G$ is torsion-free and non-uniform, we may express the Euler number $e(\rho)$ in terms of translation numbers of the holonomy along cusps. Let
\[
\tHomeo = \{ \, f \in \operatorname{Homeo}_+(\R) \mid f(x+2\pi) = f(x) + 2\pi, \forall x \in \R \, \}
\]
be the universal cover of $\Homeo$ and $\tau : \tHomeo \to \R$ denotes the translation number, which is defined by
\[
\tau (f) = \frac{1}{2\pi} \lim_{n \to \infty} \frac{f^{n}(x) - x}{n}
\]
for $f \in \tHomeo$. Then, by \cite{BIW}, we have
\begin{equation}\label{eq:BIW1}
e (\rho) = -\sum_{j=1}^{m} \tau(\tilde{\rho}(c_j)),
\end{equation}
where $c_j$ is a loop on $\Sigma$ that goes around the $j$-th cusp $(j = 1, \dots, m)$ oriented in a way compatible with the boundary of $\Sigma$ when we cut off the cusps, and a homomorphism $\tilde{\rho} : \G \to \tHomeo$ is a lift of $\rho$.

\subsection{The Seifert invariant and the translation number}\label{sec:Eu}

In this section, let us quickly recall the classical notion of the Seifert invariants, and introduce a normalized version for Seifert manifolds with transverse foliation. By using this normalized invariant, we will extend this formula \eqref{eq:BIW1} to the case where $\G$ has a nontrivial torsion taking in account of multiple fibers of the suspension bundle $\pi : M \to \Sigma$ in the next section. Since $\G$ is assumed to contain a torsion, the $\G$-actions on $\D$ and $\D \times S^1$ may not be free but only proper in general. Therefore $\Sigma$ is not a hyperbolic surface but a hyperbolic $2$-orbifold, and $M$ is not a circle bundle but a Seifert fibered $3$-orbifold in general.

Let us recall the Seifert invariant of a multiple fiber of a Seifert $3$-orbifold (see, for example, \cite{Audin,BS}). A multiple fiber has a neighborhood of the form $B^2 \times_{\Z/\alpha \Z} S^1$, where $B^2$ is a $2$-disk and the $(\Z/\alpha \Z)$-action on $B^2 \times S^1$ is generated by $(v,t) \mapsto (e^{2\pi \sqrt{-1}/\alpha} v, e^{-2\beta \pi \sqrt{-1}/\alpha}t)$ for some positive integer $\alpha$ and an integer $\beta$. Note that $\beta$ is well defined only up to modulo $\alpha$. Taking $\beta$ such that $0 \leq \beta < \alpha$, the pair $(\alpha,\beta)$ is called \emph{the Seifert invariant of the multiple fiber}, which classifies the local normal form of the fiber.

\begin{rem}
Let us see that the normal form of multiple fibers in our situation differ from the case of Seifert $3$-manifolds at most only up to orbifold singular loci. Let $\delta = \operatorname{GCD}(\alpha,\beta)$ and $\alpha'=\alpha/\delta$, $\beta'=\beta/\delta$. Since the action of the subgroup $H < \Z/\alpha \Z$ of order $\delta$ on $B^2 \times S^1$ is generated by $(v,t) \mapsto (e^{2\pi \sqrt{-1}/\delta}v, t)$, it is easy to see that the isotropy subgroup of $\{ 0 \} \times S^1$ is $H$. By using a homeomorphism $B^2/H \cong B^2$, we can see that the quotient $(B^2/H) \times S^1$ with the induced $\Z/\alpha' \Z$-action is equivariantly homeomorphic to $B^2 \times S^1$ with the $\Z/\alpha' \Z$-action generated by $(v,t) \mapsto (e^{2\pi \sqrt{-1}/\alpha'} v, e^{-2\beta' \pi \sqrt{-1}/\alpha'}t)$. The latter is the normal form of the multiple fiber of a Seifert $3$-manifold with Seifert invariant $(\alpha',\beta')$. Thus, ignoring the orbifold singularity on the multiple fiber $\{0\} \times_{\Z/\alpha \Z} S^1$, the local model is topologically same as the case of multiple fibers of Seifert $3$-manifolds.
\end{rem}

\begin{rem}
 If $M$ is a Seifert $3$-manifold, then $\alpha$ is coprime to $\beta$. Thus the Seifert invariant $(\alpha,\beta)$ is recovered from $\beta/\alpha$. In our case, $\alpha$ may not be coprime with $\beta_i$, and $\operatorname{GCD}(\alpha,\beta)$ is determined by the orbifold singularity on the multiple fiber.
\end{rem}

As in the classical case of Seifert $3$-manifolds, Seifert $3$-orbifolds with transverse foliation over surfaces with cusps are topologically (relative to the trivialization near cusps) classified with Seifert invariants. Let us introduce a normalized version, which we call the Seifert invariant normalized with respect to the given transverse foliation or the given $\G$-action on $S^1$. 
Let $O_1, \dots, O_\ell$ be the multiple $S^1$-fibers of $M$ and $(\alpha_i, \beta_i)$ the Seifert invariant of $O_i$ for each $i = 1, \dots, \ell$. Take a free $S^1$-fiber $O_0$. For $i=0, \dots, \ell$, let $D_i$ be a small open disk on $\Sigma$ around $\pi (O_i)$. Let $\Sigma'' = \Sigma - \cup_{i=0}^\ell D_i$. Take a trivialization $\sigma$ of $S^1$-bundle $M|_{\Sigma''}$ over $\Sigma''$. Note that we take $O_0$ so that we can trivialize $M|_{\Sigma''}$ even in the case where $M$ has no multiple fibers. Let $\rho''$ denote the $S^1$-action $\pi_1(\Sigma'') \to \ofg(\Sigma) \overset{\rho}{\to} \Homeo$ which is the composite of $\rho$ and the map $\pi_1(\Sigma'') \to \ofg(\Sigma)$ induced by the inclusion $\Sigma'' \to \Sigma$. By using the trivialization, we can lift $\rho''$ to an $\R$-action $\tilde{\rho}'' \colon \pi_1(\Sigma'') \to \tHomeo$. Indeed, let us consider the bundle map $\varphi : \Sigma'' \times \R \to M|_{\Sigma''}$ whose restriction to each fiber is $\R \to \R/2\pi\Z=S^1$ and which maps $\Sigma'' \times \{0\}$ to the image of $\sigma$. For every closed path $c$ on $\Sigma''$, we can consider the lift of the holonomy of the flat $S^1$-bundle $M|_{\Sigma''}$. 

Let us explain that there is a canonical way to choose the homotopy class of the trivialization of $M|_{\Sigma''}$. First let us consider how to trivialize $M|_{\Sigma''}$ near multiple fibers. 
In each torus $\pi^{-1}(\partial D_i)$, $i =1, \dots,  \ell$, consider the homology classes of the meridian $a_i$ of the solid torus, any of the free $S^1$-fibers $b_i$ and the trivialization $\sigma (\partial D_i)$. It is well known that the homology class $[a_i]$ is expressed as
\begin{equation}\label{eq:betai}
[a_i] = \alpha_i [\sigma(\partial D_i)] - \hat{\beta}_i [b_i], 
\end{equation}
where an integer $\hat{\beta}_i$ such that $\hat{\beta}_i \equiv \beta_i \mod \alpha_i$ (see, e.g., \cite[Sec.\ I.3.c]{Audin}). It is easy to see that we can change $\hat{\beta}_i$ to $\hat{\beta}_i+s\alpha_i$ for any $s \in \Z$ by changing the homotopy class of $\sigma (\partial D_i)$, and there is a unique homotopy class of $\sigma (\partial D_i)$ such that $\hat{\beta}_i = \beta_i$.
Note that the translation number of the lift of the holonomy around the multiple fiber is determined by the homotopy class of the trivialization on $M|_{\partial D_i} \to \partial D_i$. By these facts, we can normalize the translation number of the lift of the holonomy around the multiple fiber as follows. 

\begin{lem}[see e.g., \cite{BS}]\label{lem:GB0}
Let $d_i$ be a loop on $\Sigma$ which goes around the $i$-th cone point $\pi(O_i)$ in the positive direction with respect to the orientation induced from $\Sigma$ for $i= 1, \dots, \ell$. For a trivialization of $M|_{\partial D_i} \to \partial D_i$ unique up to homotopy, we have
\[
\tau(\tilde{\rho}'' (d_i)) = \frac{\beta_i}{\alpha_i}
\]
for $i=1,\dots,\ell$, where $\widetilde{\rho}''(d_i)$ is the lift of $\rho''(d_i)$ with respect to the trivialization. 
\end{lem}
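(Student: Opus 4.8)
The plan is to localise everything to a Seifert-fibred solid-torus neighbourhood $V_i$ of the multiple fibre $O_i$ and to read the translation number off the foliation that the flat structure of $M|_{\Sigma''}$ induces on the boundary torus $T_i := \pi^{-1}(\partial D_i)$. By the normal form of a multiple fibre recalled above, I may take $V_i = B^2\times_{\Z/\alpha_i\Z}S^1$ with the generator acting by $(v,t)\mapsto(e^{2\pi\sqrt{-1}/\alpha_i}v,\,e^{-2\pi\beta_i\sqrt{-1}/\alpha_i}t)$; replacing $(\alpha_i,\beta_i)$ by $(\alpha_i/\delta,\beta_i/\delta)$ with $\delta := \gcd(\alpha_i,\beta_i)$ affects neither the circle homeomorphism $\rho''(d_i)$ nor the value $\beta_i/\alpha_i$, so I may assume $\gcd(\alpha_i,\beta_i)=1$. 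In the suspension picture $V_i$ is the image of $B^2\times S^1$ with $B^2\subset\D$ a small disk about the fixed point of the elliptic $\gamma_i\in\G$ of order $\alpha_i$ over $\pi(O_i)$; hence $\rho''(d_i)=\rho(\gamma_i)^{\pm 1}$ is a homeomorphism of order $\alpha_i$, and its lift $g := \tilde\rho''(d_i)\in\tHomeo$ satisfies $g^{\alpha_i}=(\text{translation by }2\pi k_i)$ for a unique $k_i\in\Z$. Since $\alpha_i\tau(g)=\tau(g^{\alpha_i})=k_i$, we get $\tau(g)=k_i/\alpha_i$, so the whole problem reduces to identifying $k_i$.

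To pin down $k_i$, I would develop a leaf of the induced foliation on $T_i$ along the flat connection: since $\rho''(d_i)$ has order $\alpha_i$ and $g^{\alpha_i}$ is translation by $2\pi k_i$, such a leaf closes up after wrapping $\alpha_i$ times around $\partial D_i$ and $k_i$ times around a regular fibre $b_i$, and since a "$\partial D_i$ at constant level" curve represents $[\sigma(\partial D_i)]$ in the normalised trivialisation $\sigma$, the leaf represents $\alpha_i[\sigma(\partial D_i)]+k_i[b_i]$ in $H_1(T_i;\Z)$. On the other hand this leaf bounds a plaque of $\F$ in $V_i$: in the model $V_i=B^2\times_{\Z/\alpha_i\Z}S^1$ the foliation is the image of $\{B^2\times\{w\}\}_{w\in S^1}$, the image $\Delta_w$ of one plaque is an embedded disk meeting $O_i$ transversally once, and $\partial\Delta_w\subset T_i$ is a leaf. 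As the meridian $a_i$ also bounds a disk in $V_i$ and a solid torus has a unique disk-bounding slope on its boundary, $\partial\Delta_w$ is isotopic to $a_i$ in $T_i$; comparing with the normalised relation \eqref{eq:betai}, $[a_i]=\alpha_i[\sigma(\partial D_i)]-\beta_i[b_i]$, then determines $k_i$ to be $\pm\beta_i$. Aligning the orientation conventions pins the sign, giving $\tau(\tilde\rho''(d_i))=\beta_i/\alpha_i$.

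The conceptual steps are short and largely standard — existence and embeddedness of $\Delta_w$, uniqueness of the disk-bounding slope, and the homogeneity $\tau(g^{\alpha_i})=\alpha_i\tau(g)$ — and I expect the only real obstacle to be the orientation bookkeeping needed to fix the sign to $+\beta_i/\alpha_i$. This requires pinning down simultaneously: which of $\gamma_i^{\pm 1}$ the positively oriented loop $d_i$ represents; whether the holonomy of the suspension bundle $M|_{\Sigma''}$ along $d_i$ is $\rho''(d_i)$ or $\rho''(d_i)^{-1}$ (a flat suspension makes holonomy an anti-homomorphism unless an inverse is built in, which flips the sign of $\tau$); the orientations of $a_i$, $b_i$, $\sigma(\partial D_i)$ in \eqref{eq:betai}; and the sign $e^{-2\pi\beta_i\sqrt{-1}/\alpha_i}$ in the normal form. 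An internally consistent but different set of these choices produces $-\beta_i/\alpha_i$ instead, so matching the statement of \cite{BS} is exactly a matter of checking that the conventions in force here are the ones giving $+\beta_i/\alpha_i$; once that is done, the computation is as above.
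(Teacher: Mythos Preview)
Your proposal is correct and follows essentially the same route as the paper. Both arguments localise to the normal form $V_i=B^2\times_{\Z/\alpha_i\Z}S^1$, identify a flat leaf on $\partial V_i$ with the meridian class, read off the relation $[a_i]=\alpha_i[\sigma(\partial D_i)]-\beta_i[b_i]$, and conclude via $\tau(g^{\alpha_i})=\alpha_i\tau(g)$; the only cosmetic difference is that the paper passes to the $\alpha_i$-fold cover $\hat V_i=B^2\times S^1$ and tracks the lifted meridian $\hat a_i$ directly, whereas you invoke the unique disk-bounding slope on a solid torus (for which your reduction to $\gcd(\alpha_i,\beta_i)=1$ is indeed needed so that $\Delta_w$ is an embedded smooth disk). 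The sign you flag is resolved in the paper exactly as you anticipate: $d_i$ is oriented oppositely to $\partial D_i$, which flips the $-\beta_i$ coming from \eqref{eq:betai} to $+\beta_i$. One small wording point: $\rho''(d_i)$ need only satisfy $\rho''(d_i)^{\alpha_i}=\mathrm{id}$, not have order exactly $\alpha_i$, but your argument only uses the former.
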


\begin{proof}
Let $V_i = \pi^{-1}(D_i)$ for $i=1,\dots, \ell$. We can assume that $V_i$ is of the normal form $V_i=B^2 \times_{\Z/\alpha_i \Z} S^1$, where the $(\Z/\alpha_i \Z)$-action on $\hat{V}_i=B^2 \times S^1$ is generated by the map $(v,t) \mapsto (e^{2\pi \sqrt{-1}/\alpha_i} v, e^{-2\beta_i \pi \sqrt{-1}/\alpha_i}t)$. It is easy to see that we can assume that the lift of the flat connection $\omega$ on $V_i$ to $\hat{V}_i$ is equal to $\hat{\omega}$, which is the flat connection form on $\hat{V}_i$ whose integral manifolds are the fibers of the projection $\hat{V}_i = B^2 \times S^1 \to S^1$. Then, for $i=1,\dots, \ell$, by \eqref{eq:betai}, we have 
\[
[\hat{a}_i] = [\sigma (\partial D_i)] - \hat{\beta}_i [\hat{b}_i] 
\]
for some integer $\hat{\beta}_i$ with $\hat{\beta}_i \equiv \beta_i \mod \alpha_i$, where $\hat{a}_i$ is the meridian of $\hat{V}_i$ and $\hat{b}_i$ is the curve $\{0\} \times S^1$ (see \eqref{eq:betai}). Thus the meridian $\hat{a}_i$ goes around the $S^1$-fiber $-\hat{\beta}_i$ times with respect to the trivialization $\sigma (\partial D_i)$. Therefore there exists a trivialization of $M|_{\partial D_i} \to \partial D_i$, which is unique up to homotopy, such that the meridian $\hat{a}_i$ goes around the $S^1$-fiber $-\beta_i$ times. Since $\hat{a}_i$ is a concatenation of $\alpha_i$ lifts of $a_i$ and the orientation of $d_i$ is opposite to that of $\partial D_i$, we have $\tau (\tilde{\rho}''(d_i)^{\alpha_i}) = {\beta}_i$. Therefore
 we have
\[
\tau (\tilde{\rho}''(d_i)) = \frac{\tau (\tilde{\rho}''(d_i)^{\alpha_i})}{\alpha_i} = \frac{\beta_i}{\alpha_i}.\qedhere
\]
\end{proof}

Let us consider how to trivialize $M|_{\Sigma''}$ near the cusps.  Let $c_j$ be a loop on $\Sigma$ which goes around the $j$-th cusp in the positive direction with respect to the orientation induced from $\Sigma$ for $j= 1, \dots, m$. As in Lemma \ref{lem:GB0}, we can change $\tau(\tilde{\rho}''(c_j))$ by any integer by changing the homotopy class of the trivialization of $M|_{\Sigma''}$ near the cusp, and there is a unique homotopy class of the trivialization near the $j$-th cusp such that $0 \leq \tau(\tilde{\rho}''(c_j)) < 1$. Let us denote the decimal part of the translation number of the lifts of $\rho(c_j)$ by $\tau_{\operatorname{dec}}(\rho(c_j))$, which does not depend on the lift being determined by $\rho(c_j)$. 

Let us define the normalized version of the Seifert invariant. As we saw in Lemma \ref{lem:GB0} and the last paragraph, there is, unique up to homotopy, a trivialization of the $S^1$-bundle $M|_{\Sigma''}$ over the union of $\partial D_1, \dots, \partial D_\ell$ and neighborhoods of each cusp such that we have
\begin{align*}
0\leq \tau(\tilde{\rho}''(d_i)) <1 & \quad (i=1,\dots,\ell), \\
0\leq \tau(\tilde{\rho}''(c_j))<1 & \quad (j= 1, \dots, m).
\end{align*}
Note that we have $\tau(\tilde{\rho}''(d_i))=\frac{\beta_i}{\alpha_i}$ for $i=1,\dots,\ell$. It is well known that there exists a trivialization $\sigma$ of $\pi$ over $\partial D_0$, unique up to homotopy, such that the trivialization of $\pi$ on the union of $\partial D_0, \partial D_1, \dots, \partial D_\ell$ and neighborhoods of each cusp extends to $\Sigma - \cup_{i=0}^{\ell}D_i$. Let $\beta_0$ be the integer such that
\[
[a_0] = [\sigma(\partial D_0)] - \beta_0 [b_0]
\]
in $H_1(\pi^{-1}(\partial D_0))$, where $a_0$ is the meridian of $\pi^{-1}(D_0)$ and $b_0$ is any free $S^1$-fiber. Let $g$ be the genus of $\Sigma$. Following the terminology in the case of Seifert $3$-manifolds \cite{EHN}, we will call the data 
$(g;(1,\beta_0), (\alpha_1,\beta_1), \dots, (\alpha_\ell,\beta_\ell))$ \emph{the Seifert invariant of the suspension bundle $M$ normalized with respect to $\rho$}. 

\begin{rem}
Note that we adopt the sign convention on $\beta_0$ same as \cite{EHN} and different from \cite{Audin}. From the point of view of the Euler number, our convention gives a simpler formula \eqref{eq:Eulernumber}.
\end{rem}

\subsection{The Euler number and the translation number}

In this section, let us generalize the formula of the Euler number \eqref{eq:BIW1} to the case where the lattice $\G$ may not be torsion-free. We fix an action $\rho : \G \to \Homeo$ and assume that the suspension bundle $M$ of $\rho$ has Seifert invariant $(g;(1,\beta_0), (\alpha_1,\beta_1), \dots, (\alpha_\ell,\beta_\ell))$ normalized to $\rho$. For a subsurface $A$ of $\Sigma$, let $e(M|_{A})$ denote the Euler number of the flat bundle over $A$ obtained by the restriction of $M \to \Sigma$. Due to \cite{BIW}, in the case where $\G$ is torsion-free, the Euler number satisfies the following sum formula 
\[
e(\rho) = e(M|_{\Sigma_0}) + e(M|_{\Sigma_1}),
\]
where $\Sigma_0$ and $\Sigma_1$ are connected subsurfaces of $\Sigma$ such that $\Sigma_0 \cup \Sigma_1 = \Sigma$ and $\Sigma_0 \cap \Sigma_1$ is the union of a finite number of $S^1$. It is easy to see that this sum formula holds more generally in our situation where $\G$ may have torsion. 
Applying this formula to the disks around cone points, we get a generalization of the formula \eqref{eq:BIW1}. 

\begin{prop}\label{prop:erho} 
The Euler number of $\rho$ is given by
\begin{equation}\label{eq:Eulernumber}
e (\rho) = - \beta_0 -\sum_{i=1}^{\ell}\frac{\beta_i}{\alpha_i}-\sum_{j=1}^{m} \tau_{\operatorname{dec}} (\rho(c_j)),
\end{equation}
where $c_j$ is a loop on $\Sigma$ which goes around the $j$-th cusp in the positive direction with respect to the orientation induced from $\Sigma$ for $j= 1, \dots, m$.
\end{prop}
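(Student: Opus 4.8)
The plan is to remove, via the sum formula, the (orbifold) disks $D_0, D_1, \dots, D_\ell$ around the chosen free fiber $O_0$ and the multiple fibers $O_1, \dots, O_\ell$, reducing the computation of $e(\rho)$ to the Euler number over the orbifold-point-free surface $\Sigma'' = \Sigma \setminus \bigcup_{i=0}^{\ell} D_i$, where it is governed by translation numbers of the $\sigma$-lift of the peripheral holonomies as in the Burger--Iozzi--Wienhard formula \eqref{eq:BIW1}; the Seifert-invariant terms then appear through the normalization of $\sigma$ fixed in Section~\ref{sec:Eu}.

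First I would apply the sum formula repeatedly to the decomposition $\Sigma = \Sigma'' \cup \overline{D_0} \cup \overline{D_1} \cup \cdots \cup \overline{D_\ell}$, successive pieces overlapping in annular neighborhoods of the circles $\partial D_i$, to get
\[
e(\rho) = e(M|_{\Sigma''}) + \sum_{i=0}^{\ell} e(M|_{\overline{D_i}}).
\]
Each disk contributes nothing: $\pi_1^{\operatorname{orb}}(\overline{D_0})$ is trivial and $\pi_1^{\operatorname{orb}}(\overline{D_i}) \cong \Z/\alpha_i\Z$ is finite for $i \geq 1$, so their bounded cohomology vanishes in positive degrees (Trauber's theorem, already invoked for \eqref{eq:Gromov}); hence the pullback of the bounded Euler class $\eu$ to $H_b^2(\pi_1^{\operatorname{orb}}(\overline{D_i}))$ is $0$, and therefore $e(M|_{\overline{D_i}}) = 0$. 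Thus $e(\rho) = e(M|_{\Sigma''})$.

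It remains to evaluate $e(M|_{\Sigma''})$. Since $\Sigma''$ has no orbifold points, $\pi_1(\Sigma'')$ is free, the restriction $\rho'' = \rho|_{\pi_1(\Sigma'')}$ lifts to $\tHomeo$, and the lift $\tilde\rho''$ supplied by the trivialization $\sigma$ is one such lift. The boundary of $\Sigma''$ consists of the cusp loops $c_1, \dots, c_m$ together with the circles $\partial D_0, \dots, \partial D_\ell$; repeating the bounded-cohomology argument of \cite{BIW} for \eqref{eq:BIW1} over $\Sigma''$ (using that $(\rho'')^*\eu$ is the class of the coboundary of the translation-number quasimorphism composed with $\tilde\rho''$, paired with $[\Sigma'', \partial\Sigma'']$ via the connecting homomorphism) gives
\[
e(M|_{\Sigma''}) = -\sum_{j=1}^{m}\tau\bigl(\tilde\rho''(c_j)\bigr) - \sum_{i=0}^{\ell}\tau\bigl(\tilde\rho''(\partial D_i)\bigr),
\]
all boundary curves oriented as $\partial\Sigma''$. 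Now I would substitute the normalized values: by the normalization of $\sigma$ near the cusps, $0 \leq \tau(\tilde\rho''(c_j)) < 1$, so $\tau(\tilde\rho''(c_j)) = \tau_{\operatorname{dec}}(\rho(c_j))$; for $i = 1, \dots, \ell$, Lemma~\ref{lem:GB0} gives $\tau(\tilde\rho''(d_i)) = \beta_i/\alpha_i$, where $d_i$ carries, in the orientation conventions of Section~\ref{sec:Eu}, the boundary orientation of $\Sigma''$ along $\partial D_i$; and $O_0$ being a free fiber, $\rho''(\partial D_0) = \mathrm{id}$, so $\tilde\rho''(\partial D_0)$ is a translation by an integer multiple of $2\pi$, that integer being $\beta_0$ by the defining relation $[a_0] = [\sigma(\partial D_0)] - \beta_0 [b_0]$. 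Collecting terms yields
\[
e(\rho) = -\beta_0 - \sum_{i=1}^{\ell}\frac{\beta_i}{\alpha_i} - \sum_{j=1}^{m}\tau_{\operatorname{dec}}(\rho(c_j)),
\]
which is \eqref{eq:Eulernumber}.

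The one genuinely delicate point is the orientation and sign bookkeeping: the orientation of $d_i$ used in Lemma~\ref{lem:GB0}, the boundary orientations induced on the $c_j$ and on the $\partial D_i$ from $\Sigma''$, and the sign conventions on $\beta_0$ and the $\beta_i$ fixed in Section~\ref{sec:Eu} must all be lined up so that no spurious sign survives; as remarked after the definition of the normalized Seifert invariant, those conventions were chosen precisely so that this happens. A lesser point worth a word is that \eqref{eq:BIW1} is applied not to a cusped surface but to $\Sigma''$, which has cusps together with honest boundary circles; this causes no trouble because $H_b^{\bullet}(S^1)$ vanishes in positive degrees, so the relative sequence still identifies $H_b^2(\Sigma'', \partial\Sigma'')$ with $H_b^2(\pi_1(\Sigma''))$ and the same computation applies.
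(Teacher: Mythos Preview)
Your proof is correct and follows essentially the same route as the paper: both reduce $e(\rho)$ to $e(M|_{\Sigma''})$ via the sum formula and the vanishing of the disk contributions, then apply the Burger--Iozzi--Wienhard formula \eqref{eq:BIW1} on $\Sigma''$ with the normalized trivialization and invoke Lemma~\ref{lem:GB0}. Your version is slightly more explicit in places (invoking Trauber for the vanishing over the orbifold disks, spelling out the $\beta_0$ term from the defining homology relation, and flagging the extension of \eqref{eq:BIW1} to a surface with both cusps and genuine boundary), but the argument is the same.
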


\begin{proof}
Recall the notation $\Sigma'' = \Sigma - \cup_{i=0}^{\ell} D_i$. By the sum formula and the triviality of $M|_{D_i}$, we have 
\[
e(\rho) = e(M|_{\Sigma''}) + \sum_{i=0}^{\ell} e(M|_{D_i}) = e(M|_{\Sigma''}).
\]
Trivialize $M|_{\Sigma''} \to \Sigma''$ so that $0\leq \tau(\tilde{\rho}''(d_i)) <1$ for $i=1,\dots,\ell$ and $0\leq \tau(\tilde{\rho}''(c_j))<1$ and for $j= 1, \dots, m$ as in the definition of the Seifert invariant normalized with respect to $\rho$. Since $M|_{\Sigma''}$ has no multiple fibers, by \eqref{eq:BIW1} and Lemma \ref{lem:GB0}, we have
\[
e(M|_{\Sigma''}) = -\sum_{i=0}^{\ell} \tau (\tilde{\rho}''(d_i)) -\sum_{j=1}^{m} \tau_{\operatorname{dec}}(\rho(c_j)) = - \beta_0 -\sum_{i=1}^{\ell}\frac{\beta_i}{\alpha_i}-\sum_{j=1}^{m} \tau_{\operatorname{dec}} (\rho(c_j)),
\]
which implies \eqref{eq:Eulernumber}. \qedhere

\end{proof}

In order to prove the Gauss--Bonnet formula (Theorem \ref{thm:GB}), we need to consider $S^1$-connections on $M$ which are only continuous. Fix an $S^1$-action whose lift to $\D \times S^1$ is free and has the orbits tangent to the second $S^1$-factor. Let $\eta$ be an $S^1$-invariant $1$-form on $M$ such that $\eta(Z) = 1$, where $Z$ is the infinitesimal generator of the $S^1$-action. This $\eta$ plays the role of the connection form.

\begin{defn}\label{def:conconn} 
Let $\Omega$ be an integrable $2$-form on $\Sigma$. For a (non-smooth) continuous $S^1$-connection form $\eta$ on $M$, we call $\Omega$ the \emph{curvature} of $\eta$ if, for every simply-connected subset $U$ of $\Sigma$ which does not contain cone points of $\Sigma$ and whose boundary $\partial U$ is piecewise smooth, we have $\hol_\eta(\partial U) = \exp\left( \sqrt{-1} \int_{U} \Omega\right)$, where $\hol_\eta(\partial U) \in S^1$ is the holonomy of $\eta$ along $\partial U$. 
\end{defn}

Let us consider the lift $\tilde{\eta} := \varphi^*\eta$ of the connection form $\eta$, where $\varphi \colon \Sigma'' \times \R \to M|_{\Sigma''}$ is the covering map considered in the definition of the Seifert invariant normalized with respect to $\rho$. Then the lift of the holonomy of $\eta$ is equal to the holonomy of $\tilde{\eta}$. Since $\eta$ is an $S^1$-connection, the holonomy of $\tilde{\eta}$ along a path $c$ is a translation, whose translation number is computed with any horizontal lift of $c$: For a path $c$ on $\Sigma''$ which may not be closed, we have 
\[
\tau(\widetilde{\hol}_{\eta}(c))= \frac{\pr_2(\tilde{c}(1)) - \pr_2(\tilde{c}(0))}{2\pi},
\]
where $\pr_2 \colon \Sigma'' \times \R \to \R$ is the second projection and $\tilde{c}$ is a horizontal lift of $c$ with respect to $\tilde{\eta}$. By the same argument as \cite[Lemma 2.5]{AMN}, we have the following. 

\begin{lem}\label{lem:GB}
Let $\eta$ be a continuous $S^1$-connection form on $M$. We cut off all cusps of $\Sigma$ and disks $D_0, \dots, D_\ell$ to regard $\Sigma'' = \Sigma - \cup_{i=0}^{\ell} D_i$ as a compact surface with boundary loops $c'_1, \dots, c'_m$ and $\partial D_0, \dots, \partial D_\ell$. If $\eta$ has curvature $\Omega$ in the sense of Definition \ref{def:conconn}, then we have
\begin{equation}\label{eq:GB}
\frac{1}{2\pi} \int_{\Sigma''}\Omega = - \sum_{i=0}^{\ell} \tau(\widetilde{\hol}_{\eta}(\partial D_i)) + \sum_{j=1}^{m} \tau(\widetilde{\hol}_{\eta}(c'_j)).
\end{equation}
\end{lem}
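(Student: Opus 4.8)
The plan is to reduce the statement to a Stokes-type argument combining the defining property of the curvature (Definition \ref{def:conconn}) with the translation-number interpretation of holonomy along the cut locus. First I would fix a cell decomposition of the compact surface $\Sigma''$: triangulate $\Sigma''$ by simply-connected pieces $U_k$ none of which contains a cone point (this is possible since all cone points lie in the interiors of the removed disks $D_1,\dots,D_\ell$), with piecewise smooth boundaries, and compatible with the boundary loops $c'_1,\dots,c'_m,\partial D_0,\dots,\partial D_\ell$. On each cell Definition \ref{def:conconn} gives $\hol_\eta(\partial U_k)=\exp\!\bigl(\sqrt{-1}\int_{U_k}\Omega\bigr)$; passing to the lift $\tilde\eta=\varphi^*\eta$ on $\Sigma''\times\R$, the holonomy of $\tilde\eta$ along $\partial U_k$ is the translation by $2\pi\,\tau(\widetilde{\hol}_\eta(\partial U_k))$, and because $U_k$ is simply connected the horizontal lift closes up to within this translation, so $\tau(\widetilde{\hol}_\eta(\partial U_k))=\frac{1}{2\pi}\int_{U_k}\Omega$ as a genuine real equality, not merely mod $\Z$. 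This is the key upgrade that makes the additive bookkeeping work.

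Next I would sum these local identities over all cells $U_k$. On the right-hand side the integrals add up to $\frac{1}{2\pi}\int_{\Sigma''}\Omega$ since the $U_k$ tile $\Sigma''$. On the left-hand side I need the additivity of the translation number $\tau(\widetilde{\hol}_\eta(\cdot))$ over concatenation of paths: since $\tilde\eta$ is an $S^1$-connection, parallel transport along any path is a translation of $\R$, and the translation amount is given by $\frac{1}{2\pi}(\pr_2(\tilde c(1))-\pr_2(\tilde c(0)))$ for a horizontal lift $\tilde c$, which is manifestly additive under concatenation and orientation-reversing under path reversal. Therefore all contributions from interior edges of the triangulation cancel in pairs (each interior edge is traversed once in each direction by the two adjacent cells), and what survives is exactly the sum of $\tau(\widetilde{\hol}_\eta(\cdot))$ over the boundary edges, grouped into the boundary loops $c'_1,\dots,c'_m$ and $\partial D_0,\dots,\partial D_\ell$, each with the orientation induced as the boundary of $\Sigma''$.

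Finally I would match the signs. With the orientation conventions fixed in the excerpt, the loops $\partial D_i$ bound the removed disks $D_i$, hence appear in $\partial\Sigma''$ with the orientation opposite to the one used when they are regarded as boundaries of the $D_i$ (this is the source of the minus sign in $-\sum_{i}\tau(\widetilde{\hol}_\eta(\partial D_i))$), while the cusp loops $c'_j$ appear with their positive orientation, giving $+\sum_j \tau(\widetilde{\hol}_\eta(c'_j))$. Collecting, $\frac{1}{2\pi}\int_{\Sigma''}\Omega = -\sum_{i=0}^{\ell}\tau(\widetilde{\hol}_\eta(\partial D_i)) + \sum_{j=1}^{m}\tau(\widetilde{\hol}_\eta(c'_j))$, which is \eqref{eq:GB}. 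The main obstacle, and the only genuinely delicate point, is the passage from the holonomy identity in $S^1$ (an equality mod $2\pi\Z$) to an honest equality of real translation numbers cell by cell: this requires that each $U_k$ be simply connected so that the horizontal lift of $\partial U_k$ is well defined and its endpoint displacement equals $\int_{U_k}\Omega$ exactly, together with a continuity/compactness argument that $\Omega$ is integrable and the cells can be chosen small enough that $\bigl|\int_{U_k}\Omega\bigr|$ is bounded away from the ambiguity — but this is precisely the content already established in \cite[Lemma 2.5]{AMN}, so I would simply invoke that argument verbatim, noting that the presence of the extra boundary circles $\partial D_0,\dots,\partial D_\ell$ (as opposed to only cusp loops) changes nothing in the combinatorics beyond adding more terms of the first type to the sum.
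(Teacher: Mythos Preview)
Your proposal is correct and matches the paper's approach: the paper does not give a standalone proof here but simply refers to \cite[Lemma 2.5]{AMN}, and your triangulation-plus-cancellation argument with the $S^1$-to-$\R$ upgrade on simply-connected cells is precisely that argument spelled out, with the only novelty being the extra boundary components $\partial D_0,\dots,\partial D_\ell$, which you correctly note changes nothing beyond adding terms to the sum.
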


\section{The Gauss--Bonnet formula for the connection associated with a harmonic measure}\label{sec:proof}

\subsection{Construction of the $S^1$-connection}\label{conn_sec}

We can construct the $S^1$-connection $\overline\omega$ associated with a harmonic measure in a way similar to the torsion-free case \cite{AMN}. Let $\G$ be a lattice of $\PSL$, $\rho$ a homomorphism $\G \to \Homeo$ and $\Sigma = \G \backslash \D$. We assume that $\rho (\Gamma)$ has no finite orbit in $S^1$. 
Consider the suspension foliation on $M=\Sigma \times_{\rho} S^1$ equipped with the leafwise Poincar\'e metric. There exists a harmonic probability measure $\mu$ on this foliation by Lemma \ref{lem:Alvarez}. 
Consider the lift $\tilde\mu$ of $\mu$ to $\D \times \R$. By disintegration formula \eqref{eq:disint}, $\tilde\mu$ is of the form
\[
\tilde{\mu} = q(z,t) \vol(z) \nu(t), 
\]
where $\vol$ is the leafwise volume measure on $\D$, for some Borel measure $\nu$ on $\R$ and a locally integrable function $q$ on $\D \times \R$ with respect to $\vol(z) \nu(t)$ such that $q(\cdot, t)$ is a positive harmonic function on $\D$ for $\nu$--a.e.\ $t$. 

The first step is to collapse the complement of the support of $\mu$. For each $z \in \D$, consider a measure $\mu_z$ on $S^1$ induced by $\tilde{\mu}_z := q(z,t)\nu(t)$ on $\R$. Since there is no finite orbit of $\rho(\G)$ in $S^1$, every leaf is a hyperbolic good orbifold of infinite volume. Since such orbifold admits no $L^1$ harmonic function by \cite[Theorem 2.4]{Li-Schoen} or \cite[Theorem 1]{Li}, it follows that $\mu_z$ is non-atomic for every $z \in \D$. We consider a non-decreasing map $\tilde{\psi}: \R \to \R$ defined by 
\[
\tilde{\psi}(t) := \int_0^t \tilde\mu_0(ds)  = \int_0^t q(0,s) \nu(ds).
\]
Since $\mu_0$ is non-atomic, $\tilde{\psi}$ is continuous. It covers a semiconjugacy $\psi : S^1 \to S^1$ which collapses the complement of the support of $\mu_0$. Thus we obtain a $\G$-action $\rho'$ such that $\rho'(\gamma) \circ \psi = \psi \circ \rho (\gamma)$ for every $\gamma \in \G$. Let $M'=\Sigma \times_{\rho'} S^1$ be the suspension bundle of $\rho'$, which is a Seifert $3$-orbifold in general. The suspension foliation of $M'$ admits a harmonic measure $\mu'$ of full support induced by $\mu$, whose lift $\tilde{\mu}'$ to $\D \times \R$ is expressed as 
\[
\tilde{\mu}' = h(z,t) \vol(z) \lambda(t), 
\]
where $\lambda$ is the Lebesgue measure on $\R$, for some locally integrable function $h$ on $\D \times \R$ with respect to $\vol(z) \lambda(t)$ such that $h(\cdot, t)$ is a positive harmonic function on $\D$ for $\lambda$--a.e.\ $t$.

The second step is the construction of a smooth structure on $M'$ such that $\rho'$ is bi-Lipschitz. For this purpose, we integrate the harmonic measure $\mu'$ on each fiber of $\pi: M' \to \Sigma$. Namely, we consider the map $\tilde{\Phi}: \D \times \R \to \D \times \R$ given by $\tilde{\Phi}(z,t) = (z, \varphi(z,t))$ where
\begin{equation}\label{varphi_eq}
\varphi(z,t):=\int_0^t h(z,s) ds.
\end{equation}
Then, by Harnack's inequality, we can show that $\tilde{\Phi}$ is a locally bi-Lipschitz homeomorphism. Since $h(z,t)$ is harmonic in $z$, so is $\varphi$. In particular, it is smooth in $z$. By conjugating $\rho'$ on $\D \times \R$ by $\tilde\Phi$, we get a smooth $\G$-action on $\D \times \R$, which induces a smooth structure on $M'$. Since $\tilde\Phi$ is locally bi-Lipschitz, the $\G$-action is bi-Lipschitz.

In order to show that $\varphi$ descends to a map $\D \times S^1 \to \D \times S^1$, we need the fact that the total measure on each free $S^1$-fiber is constant. This follows from the fact that the function $z \mapsto \mu'_z(S^1) = \int_0^{2\pi} h(z,t) dt$ is positive harmonic and invariant under the $\G$-action: Indeed, if it is not constant, it induces a non-constant $L^1$ positive harmonic function on $\Sigma$, which contradicts \cite[Theorem 2.4]{Li-Schoen} or \cite[Theorem 1]{Li}.

Finally let us construct $\overline\omega$ by taking the fiberwise average of the flat connection. Let $\omega$ be the flat connection form defining the suspension foliation of $\rho'$. By taking the average of $\omega$ under the principal $S^1$-action $\{\phi_t \}$ that preserves $\mu_z'$ for a.e.\ $z \in \Sigma$, we obtain an $S^1$-connection of  $M'$, \begin{equation}\label{eq:omegabar}
\overline\omega := \frac{1}{2\pi}\int_0^{2\pi} \phi^*_t\omega\, dt,
\end{equation}
which we call the $S^1$-\textit{connection form associated with} $\mu$. 

\subsection{Proof of the Gauss--Bonnet formula}

Let us prove Theorem \ref{thm:GB} by using the notation of the last section \ref{conn_sec}. The former half of this proof is the same as in \cite{AMN}, but we include the outline for the convenience of the readers. We can assume that the given harmonic measure $\mu$ is of full support on the suspension bundle $M$ by collapsing the complement of $\operatorname{supp} \mu$ as in the third paragraph of this section; We replace $M$, $\rho$ and $\mu$ with $M'$, $\rho'$ and $\mu'$, respectively.

Let us consider the $S^1$-connection $\overline{\omega}$ associated with $\mu$ on the suspension bundle $M$. In the following local computation, we will locally identify $\Sigma$ with $\D$. First let us show that $\overline{\omega}$ has a curvature form in the sense of Definition \ref{def:conconn}. For each $z = x_1 + \sqrt{-1} x_2 \in \D$, let $\tau(z, \cdot)$ denote the inverse map of $\varphi(z, \cdot): \R \to \R$  given by \eqref{varphi_eq}. Let 
\begin{equation}\label{eq:omegaj}
\omega_j(z, \theta) := \left.\frac{\partial \varphi}{\partial x_j}(z, t)\right\rvert_{t = \tau(z,\theta)}
\end{equation}
for $j = 1, 2$. Since $\omega$ is Lipschitz, by Harnack's inequality, we can show that the function $\omega_j$ is continuous on $\D \times S^1$.
For each $z \in \D$, $\omega_j(z, \cdot)$ is a periodic Lipschitz function on $\R$ of period $2\pi$.
Then we have
\[
\overline{\omega} = d\theta - \sum_{j=1,2}\left( \int_0^{2\pi} \omega_j(z, \theta) \frac{d\theta}{2\pi} \right) dx_j
\]
on $\D \times S^1$. Since $\omega$ is Lipschitz, by the argument in \cite{Adachi1,AMN}, we can show $\overline{\omega}$ has the curvature of the form $K(z) \vol(dz)$ in the sense of Definition \ref{def:conconn}, where $K$ is a measurable function on $\Sigma$ given by
\begin{equation}\label{K_eq}
K(z) := -\frac{(1-\lvert z \rvert^2)^2}{4}\int_0^{2\pi} \left(-\frac{\partial \omega_1}{\partial \theta}(z, \theta) \omega_2(z, \theta) + \frac{\partial \omega_2}{\partial \theta}(z, \theta) \omega_1(z, \theta) \right) \frac{d\theta}{2\pi}.
\end{equation}
By applying the isoperimetric inequality and Harnack's inequality, we will show that $\lvert K(z) \rvert$ is bounded by one everywhere.
Among the definition of $K(z)$ in \eqref{K_eq},
\[
\int_0^{2\pi} \frac{1}{2}\left(-\frac{\partial \omega_1}{\partial \theta}(z, \theta) \omega_2(z, \theta) + \frac{\partial \omega_2}{\partial \theta}(z, \theta) \omega_1(z, \theta) \right) d\theta
\]
is the signed area of the domain bounded by a Lipschitz closed curve $(\omega_1(z,\theta), \omega_2(z,\theta))$ $(0 \leq \theta \leq 2\pi)$ on $\R^2$. The tangent vector of this curve is defined for a.e.\ $\theta$ and we have
\begin{align*}
\frac{\partial}{\partial \theta}\omega_j(z, \theta) = \frac{\partial \log h}{\partial x_j}(z, \tau(z,\theta))
\end{align*}
by a straight forward computation. Then, by the isoperimetric inequality, we have
\begin{multline}\label{eq:Kest}
\lvert K \rvert \leq \frac{(1-\lvert z \rvert^2)^2}{4\pi} \cdot \frac{1}{4\pi} \left(\int_0^{2\pi} \sqrt{\left(\frac{\partial \log h}{\partial x_1}\right)^2 + \left(\frac{\partial \log h}{\partial x_2}\right)^2} d\theta\right)^2 \\
= \frac{1}{4\pi^2} \left(\int_0^{2\pi} \| d \log h \| d\theta\right)^2,
\end{multline}
which implies $\lvert K \rvert \leq (2\pi)^2/{4\pi^2} = 1$ by Harnack's inequality.

The next step is to show the Gauss--Bonnet formula for $\overline{\omega}$
\begin{equation}\label{eq:GB34}
e(\rho) = \frac{1}{2\pi} \int_{\Sigma} K(z) \vol (dz).
\end{equation}
This is a theorem of Satake \cite{Satake} when $\Sigma$ is a closed orbifold. Let us consider the case where $\Sigma$ has cusps. By hypothesis, the suspension bundle $M \to \Sigma$ has multiple fibers $O_1, \dots, O_\ell$ whose Seifert invariants are $(\alpha_1,\beta_1), \dots, (\alpha_\ell,\beta_\ell)$, respectively. 
We take a free $S^1$-fiber $O_0$, and fix a trivialization $\sigma$ of $M$ on $\Sigma - \cup_{i=0}^{\ell}\pi(O_i)$. Then, by Proposition \ref{prop:erho}, we have
\begin{equation}\label{eq:tr1}
e (\rho) = - \beta_0 -\sum_{i=1}^{\ell}\frac{\beta_i}{\alpha_i}-\sum_{j=1}^{m} \tau_{\operatorname{dec}} (\rho(c_j)),
\end{equation}
where $c_j \in \Gamma$ corresponds to a loop that goes around the $j$-th cusp of $\Sigma$ for each $j=1, \dots, m$. 
A neighborhood of each cusp is foliated by closed horocircles. For each $j$, let $\{c_j^s\}_{s \gg 0}$ be a family of horocircles in a neighborhood of the $j$-th cusp so that
the hyperbolic diameter $\delta^s_j$ of $c_j^s$ tends to $0$   
and $c_j^s$ approaches to the $j$-th cusp as $s \to \infty$. We also consider a family of disks $D_i^{s}$ in $\Sigma$ centered at $\pi(O_i)$ for $i = 0, \dots, \ell$ such that the area of $D_i^s$ goes to $0$ and the length of $\partial D_i^s$ goes to $0$ as $s \to \infty$. 
For $s \gg 0$, let $\Sigma^s$ be the compact subsurface of $\Sigma - \cup_{i=0}^{\ell}D_i^s$ bounded by $c_1^s, \dots, c_m^s$ and $\partial D_0^s, \dots, \partial D_\ell^s$ (see Fig.\ \ref{fig:1}).  
        \begin{figure}[ht]
	    	\centering
		    \includegraphics[scale=0.5]{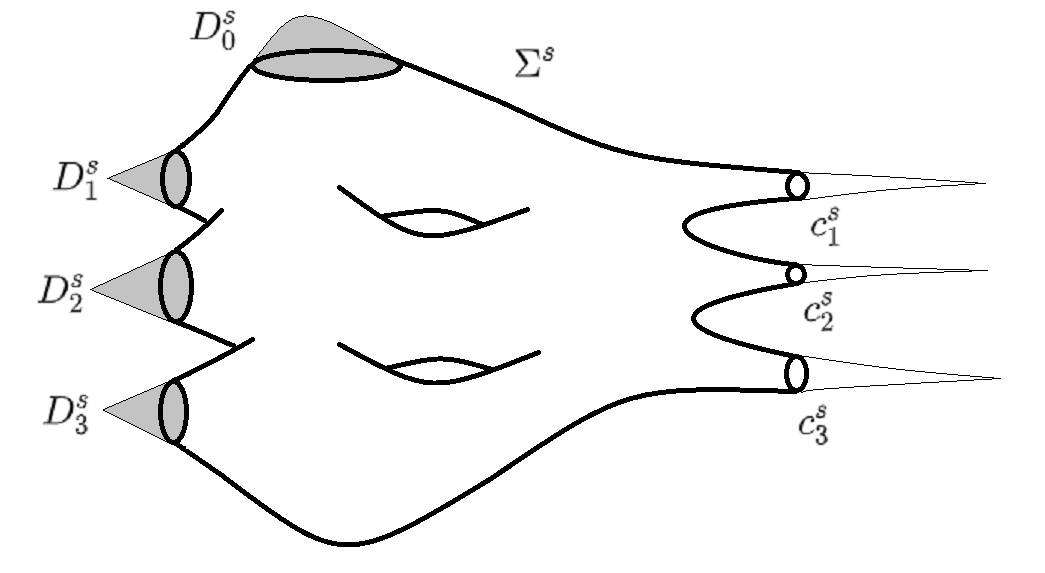}
    		\caption{Subsurface $\Sigma^s$}
	    	\label{fig:1}
        \end{figure}
For $s \gg 0$, by Lemma \ref{lem:GB}, we have
\begin{equation}\label{eq:tr2}
 \frac{1}{2\pi} \int_{\Sigma^s}K(z) \vol (dz)= -\sum_{i=0}^{\ell} \tau(\widetilde{\hol}_{\overline\omega}(\partial D_i^s)) + \sum_{j=1}^{m} \tau(\widetilde{\hol}_{\overline\omega}(c_j^s)),
\end{equation}
where $\widetilde{\hol}_{\overline\omega}(\gamma) : \R \to \R$ is the lift of the holonomy map of $\overline\omega$ along a curve $\gamma$ with respect to $\sigma$. Since $\Gamma \backslash \D$ has finite volume and $\lvert K(z) \rvert \leq 1$ a.e.\ $z$ by the last step, we have  $\int_{\Sigma^s} K(z) \vol(dz) \to \int_\Sigma K(z) \vol(dz)$ as $s \to \infty$. 
Therefore, by \eqref{eq:tr1} and \eqref{eq:tr2}, for the proof of the Gauss--Bonnet formula \eqref{eq:GB34}, it remains to find a sequence $\{s_n\}$ such that 
\begin{align}\label{eq:eachcusp1}
\tau(\widetilde\hol_{\overline\omega}(\partial D_0^{s_n})) & \longrightarrow \beta_0, \\
\tau(\widetilde\hol_{\overline\omega}(\partial D_i^{s_n})) & \longrightarrow \frac{\beta_i}{\alpha_i}, \label{eq:eachcusp2} \\
\tau(\widetilde\hol_{\overline\omega}(c_j^{s_n})) & \longrightarrow -\tau_{\operatorname{dec}} (\rho(c_j)) \label{eq:eachcusp3}
\end{align}
as $n \to \infty$ for each fixed $1\leq i\leq n$ and $1\leq j \leq m$. This can be shown in the same way as in \cite[\S3.4]{AMN} thanks to Harnack's inequality. In order to prove \eqref{eq:eachcusp1} and \eqref{eq:eachcusp2}, note that we can follow the computation in \cite[\S3.4]{AMN} only with the assumption that the area of $D_i^s$ goes to $0$ and the length of $\partial D_i^s$ goes to $0$ as $s \to \infty$ without having a cusp at the center of $D_i^s$. Note that all translation numbers in \eqref{eq:eachcusp1},\eqref{eq:eachcusp2} and \eqref{eq:eachcusp3} are computed for the lifts with respect to the trivialization $\sigma$ that was used to define the Seifert invariant normalized with respect to $\rho$. 

\subsection{Applications to rigidity}

A key point of the application of the Gauss--Bonnet formula (Theorem \ref{thm:GB}) to rigidity is the characterization of harmonic functions on the Poincar\'e disk $\D$ which achieves the equality in Harnack's inequality: If $h$ is a positive harmonic function on $\D$ such that $\|d \log h\| = 1$ at some point, then $h$ is of the form $h(z) =  C\frac{1-\lvert z \rvert^2}{\lvert m-z \rvert^2}$ for some $m \in S^1$ and a positive constant $C$ (see \cite[Lemma 2.3]{AMN}). We outline a direct proof of the rigidity theorem (Theorem \ref{thm:Poi}) for harmonic measures based on Theorem \ref{thm:GB} below, which is essentially contained in \cite{AMN}.

\begin{proof}[Proof of Theorem \ref{thm:Poi}]
Recall that $\tilde\mu$ is of the form
\[
\tilde{\mu} = q(z,t) \vol(z) \nu(t), 
\]
where $\vol$ is the leafwise volume measure on $\D$, for some Borel measure $\nu$ on $\R$ and a locally integrable function $q$ on $\D \times \R$ with respect to $\vol(z) \nu(t)$ such that $q(\cdot, t)$ is a positive harmonic function on $\D$ for $\nu$--a.e.\ $t$. Let $\tilde{\psi} : \R \to \R$ be the map that collapses the complement of the support of $\tilde{\mu}_0 = q(0,t) \nu$, which we considered in Section \ref{conn_sec}. Then, the pushforward $\tilde{\mu}' = (\operatorname{id} \times \psi)_* \tilde{\mu}$ is of the form
\[
\tilde{\mu}' = h(z,t) \vol(z) \lambda(t), 
\]
where $\lambda$ is the Lebesgue measure and $h(\cdot,t)$ is a positive harmonic function on $\D$ for $\lambda$--a.e.\ $t$. Since $\tilde{\psi}_*(q(0,t)\nu) = \lambda$, we have
\begin{equation}\label{eq:qzt}
q(z,t) = q(0,t) h(z,\tilde{\psi} (t))
\end{equation}
for $\nu$--a.e.\ $t$ in $\R$. By the hypothesis $e(\rho)=\eo(\Sigma)$, by the Gauss--Bonnet theorem $\frac{1}{2\pi}\int_{\Sigma} K(z) \vol(dz) = \eo(\Sigma)$ for orbifolds due to Satake \cite{Satake} the equalities hold in the isoperimetric inequality for the curve $(\omega_1(z,\theta),\omega_2(z,\theta))$ (see \eqref{eq:omegaj}) and Harnack's inequality for $h(\cdot,t)$ in \eqref{eq:Kest}. Since $h(\cdot, t)$ achieves the equality in Harnack's inequality, by \cite[Lemma 2.3]{AMN}, we have that, for $\lambda$--a.e.\ $t$ in $\R$, we have
\begin{equation*}\label{eq:harmonicmeas}
h(z,t) =  \frac{1-\lvert z \rvert^2}{\lvert m(e^{\sqrt{-1}t})-z \rvert^2}
\end{equation*}
for some $m(e^{\sqrt{-1}t}) \in S^1$. Regard $m$ as a map $S^1 \to S^1$. From the equality in the isoperimetric inequality, it follows that $(\omega_1(z,\theta),\omega_2(z,\theta))$ is a round circle for a.e.\ $z \in \D$, which implies that $m$ is a rotation. Thus we have 
\[
h(z,t) =  \frac{1-\lvert z \rvert^2}{\lvert e^{\sqrt{-1}(t+t_0)} -z \rvert^2},
\]
for some $t_0 \in \R$. By \eqref{eq:qzt}, we have 
\[
q(z,t) = q(0,t) \frac{1-\lvert z \rvert^2}{\lvert \mathfrak{m}(e^{\sqrt{-1}t}) -z \rvert^2},
\]
where $\mathfrak{m}(e^{\sqrt{-1}t}) = e^{\sqrt{-1}(\tilde{\psi}(t) + t_0)}$, for $\nu$--a.e.\ $t$ in $\R$. Then $\mathfrak{m}$ is a continuous degree one monotone map. The $(\rho,\rho_0)$-equivariance of $\mathfrak{m}$ follows from the $\G$-invariance of $\tilde{\mu}$. By replacing the original $\nu$ with $q(0,t) \nu$, we have Theorem \ref{thm:Poi}. 
\end{proof}

\section{Eisenbud--Hirsch--Neumann's inequality}

Let us prove Theorems \ref{thm:EHN} and \ref{thm:EHNeq}. Recall the hypothesis and the notation. Given a lattice $\G$ of $\PSL$ and a homomorphism $\rho : \G \to \Homeo$, the suspension bundle is a Seifert $3$-orbifold $M$ (see Section \ref{sec:Eu}). Take a free $S^1$-fiber $O_0$, and let $O_1, \dots, O_{\ell}$ be the multiple fibers of $M$. Let $(g;(1,\beta_0), (\alpha_1,\beta_1), \dots, (\alpha_\ell,\beta_\ell))$ be the Seifert invariant of $M$ normalized with respect to $\rho$ (see Section \ref{sec:Eu}). We assume that the topological genus $g$ of the base orbifold $\Sigma := \G \backslash \D$ is not $0$. Recall that $\pi : M \to \Sigma$ is trivialized on $\Sigma - \cup_{i=0}^{\ell} D_i$, where $D_i$ is a small open disk neighborhood of $\pi(O_i)$ for $i=0, \dots, \ell$.

Note that the orbifold locus of $M$ is contained in the union of multiple fibers. Then after cutting off an $S^1$-invariant tubular neighborhood of each component of the singular locus, the remaining part can be regarded as the interior of a compact Seifert $3$-manifold with boundary. Since $g \geq 1$, by a result of Eisenbud--Hirsch--Neumann \cite[Theorem 3.6]{EHN}, we have 
\begin{equation}\label{eq:EHNb}
2-2g-\ell - \sum_{j=1}^m \lceil \underline{m}h_j \rceil \leq \beta_0 \leq 2g-2 -\sum_{j=1}^m \lfloor \overline{m}h_j \rfloor,
\end{equation}
where $m$ is the number of cusps of the base orbifold $\Sigma$, the map $h_j \colon \R \to \R$ is the lift of the holonomy along the curve that goes around the $j$-th cusp with respect to the fixed trivialization, $\underline{m}h = \min_{x} (h(x)-x)$ and $\overline{m}h = \max_{x} (h(x)-x)$ for $h \in \tHomeo$. We use the following inequality.

\begin{lem}\label{lem:tr}
For $h \in \tHomeo$, we have
\begin{equation*}
    \lceil \overline{m}h \rceil - 1 \leq \tau (h) \leq \lfloor \underline{m}h \rfloor +1.
\end{equation*}
\end{lem}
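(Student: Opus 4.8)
The claim relates the translation number $\tau(h)$ to the integer parts of the extreme displacements $\underline{m}h = \min_x(h(x)-x)$ and $\overline{m}h = \max_x(h(x)-x)$. The plan is to use the elementary but fundamental fact that for any $h\in\tHomeo$ and any $x\in\R$, the displacement $h(x)-x$ is within distance $1$ of $2\pi\tau(h)$ — more precisely, that $\underline{m}h \le 2\pi\tau(h) \le \overline{m}h$ and, because $h(x+2\pi)=h(x)+2\pi$, the range of the displacement function $x\mapsto h(x)-x$ has length strictly less than... wait, actually the right statement here is that $\tau(h)$ always lies in the closed interval $[\underline{m}h,\ \overline{m}h]$, and moreover this interval has length $\overline{m}h-\underline{m}h < 2\pi$ unless $h$ is a translation. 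But the cleanest route avoids worrying about the precise width.

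First I would recall the standard sandwiching inequality for the rotation/translation number: for every $x$,
\[
  \underline{m}h \;\le\; h^n(x)-x - (n-1)\cdot 0 \quad\text{(telescoping)},
\]
more usefully, iterating $h(y)-y \ge \underline{m}h$ along the orbit $x, h(x), \dots, h^{n-1}(x)$ gives $h^n(x)-x \ge n\,\underline{m}h$, hence dividing by $n$ and letting $n\to\infty$ yields $2\pi\tau(h)\ge \underline{m}h$; symmetrically $2\pi\tau(h)\le\overline{m}h$. Normalizing so that the translation number is measured in turns (the paper's $\tau$ already carries the factor $1/2\pi$), this reads $\underline{m}h \le \tau(h) \le \overline{m}h$ in the paper's normalization — I must be careful to keep the $2\pi$ bookkeeping consistent with the definition $\tau(f)=\frac1{2\pi}\lim_n \frac{f^n(x)-x}{n}$ and with the way $\underline m,\overline m$ are written without a $2\pi$, so in fact the correct intermediate inequality is $\underline{m}h \le 2\pi\tau(h) \le \overline{m}h$. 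Second, I would invoke that the displacement function has oscillation less than $2\pi$: since $h$ is a strictly increasing homeomorphism of $\R$ commuting with $x\mapsto x+2\pi$, for any $x,y$ with $x<y<x+2\pi$ we have $h(x)<h(y)<h(x)+2\pi$, so $(h(y)-y)-(h(x)-x) = (h(y)-h(x)) - (y-x)$ lies strictly between $-2\pi$ and $2\pi$; taking $x,y$ where the min and max are attained gives $\overline{m}h - \underline{m}h < 2\pi$.

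Combining: from $2\pi\tau(h)\le\overline{m}h$ and $\overline{m}h < \underline{m}h + 2\pi$ one gets $2\pi\tau(h) < \underline{m}h+2\pi$, i.e. $\tau(h) < \frac{\underline{m}h}{2\pi}+1$; but I need the stated $\tau(h)\le \lfloor\underline{m}h\rfloor+1$, which is a different normalization — so I must reconcile: evidently in Lemma~\ref{lem:GB0} and its neighbourhood the paper is treating $h_j$ as a lift whose translation number is a genuine real number and $\underline m h, \overline m h$ are also genuine reals (displacements), not divided by $2\pi$; in that convention the identity $h(x+1)=h(x)+1$ would replace $2\pi$ by $1$. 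Reading \eqref{eq:EHNb} and the ceiling/floor, the intended normalization is the one where period is $1$, so $\overline{m}h-\underline{m}h<1$ and $\underline{m}h\le\tau(h)\le\overline{m}h$. Then $\tau(h)\ge\underline{m}h>\overline{m}h-1\ge\lceil\overline{m}h\rceil-1$ (using $\lceil x\rceil - 1 < x$... actually $\lceil x\rceil < x+1$ so $\lceil x\rceil - 1 < x$, giving $\overline{m}h - 1 \ge \lceil\overline{m}h\rceil - 2$ — not quite). The clean way: $\tau(h)\ge\underline{m}h$ and $\underline{m}h > \overline{m}h - 1 \ge \lceil \overline{m}h\rceil - 1 - 1$? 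Let me instead use $\tau(h)\ge\underline{m}h \ge \overline{m}h - 1$ together with $\overline{m}h \ge \lceil\overline{m}h\rceil - 1$... no: always $\overline{m}h \le \lceil\overline{m}h\rceil$ and $\overline{m}h > \lceil\overline{m}h\rceil - 1$. So $\tau(h) \ge \underline{m}h > \overline{m}h - 1 \ge \lceil\overline{m}h\rceil - 1 - 1$. That gives $-2$, too weak — so the correct chain must instead be: $\lceil\overline{m}h\rceil - 1 \le \overline{m}h$ is FALSE in general; rather $\lceil\overline{m}h\rceil \le \overline{m}h + 1$, so $\lceil\overline{m}h\rceil - 1 \le \overline{m}h$. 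Yes — that IS true: $\lceil y\rceil \le y+1$ always, hence $\lceil\overline{m}h\rceil-1\le\overline{m}h$. But $\tau(h)$ need not be $\ge\overline{m}h$. Hmm — so the real argument for the left inequality $\lceil\overline{m}h\rceil-1\le\tau(h)$ must be: $\tau(h)\ge\underline{m}h > \overline{m}h-1$, and since $\tau(h)\ge\underline{m}h$ and we want a lower bound in terms of $\lceil\overline{m}h\rceil$, note $\overline{m}h - 1 < \tau(h)$, and also... the integer $\lceil\overline{m}h\rceil - 1$ satisfies $\lceil\overline{m}h\rceil - 1 < \overline{m}h < \tau(h)+1$; that bounds $\lceil\overline{m}h\rceil - 1 < \tau(h)+1$, i.e. $\lceil\overline{m}h\rceil - 2 \le \tau(h)$ when sharp — still off by one.

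So the genuinely delicate point — and the main obstacle — is squeezing out the last integer. The resolution must use that $\tau(h)\ge\underline{m}h$ AND that when $\tau(h)$ is itself an integer $n$, one has $h(x)=x+n$ for some $x$ (a fixed point of $h$ conjugated by a translation), forcing $\underline{m}h\le n\le\overline{m}h$ with the further structure that $n-1<\underline{m}h$ or ... Actually the correct and standard lemma is: $\lceil\overline{m}h\rceil - 1 \le \tau(h)$ because if $\tau(h) = n \in \Z$ then $\overline m h \le n+1$ (if $\overline m h > n+1$ then $h(x)-x > n+1$ somewhere, and combined with $\underline m h \le n$ and the intermediate value theorem there is $y$ with $h(y)-y = n+1$, i.e. $h(y)=y+n+1$; but then $h$ has $\tau \ge$ ... the point $y+2\pi\cdot(\text{period})$ ... this needs the period-$1$ version: $h(y)=y+n+1$ means $y$ is a fixed point of $h-(n+1)$, so $\tau(h)=n+1\ne n$, contradiction). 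Therefore $\overline m h \le n+1 = \tau(h)+1$, giving $\lceil\overline m h\rceil \le \tau(h)+1$, i.e. the left inequality; for non-integer $\tau(h)$, $\tau(h)$ lies in an open unit interval $(n,n+1)$ with no fixed points, $\underline m h$ and $\overline m h$ lie in $[n, n+1]$, and one argues $\overline m h \le n+1 \le \lceil\tau(h)\rceil \le \tau(h)+1$ again. So the plan for the proof is: (i) establish $\underline m h \le \tau(h) \le \overline m h$ by the telescoping-limit argument; (ii) establish that $h(y)-y$ never equals an integer strictly between, equivalently that if $h(y)-y = k\in\Z$ then $\tau(h)=k$; (iii) deduce $\overline m h \le \lfloor\tau(h)\rfloor + 1$ and $\underline m h \ge \lceil\tau(h)\rceil - 1$ from (ii) via the intermediate value theorem; (iv) rearrange to get $\lceil\overline m h\rceil - 1 \le \tau(h)$ and $\tau(h)\le\lfloor\underline m h\rfloor + 1$. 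The main obstacle is step (ii)–(iii), the "no integer displacement except at the translation number" fact, which is where the rigidity of circle homeomorphisms (existence of fixed points for $h - k$ when the displacement hits $k$) does the real work; everything else is the routine subadditivity argument for rotation numbers.
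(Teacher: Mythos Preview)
Your eventual plan (i)--(iv) is correct and coincides with the paper's proof: the paper takes $n=\lfloor\tau(h)\rfloor$, assumes $\overline{m}h\ge n+1$, uses the intermediate value theorem (together with the fact that $h(x)-x<n+1$ for some $x$, since $\tau(h)<n+1$) to locate $y$ with $h(y)-y=n+1$, and then observes that periodicity gives $h^s(y)=y+s(n+1)$ and hence $\tau(h)=n+1$, a contradiction---this is exactly your step~(ii) (``integer displacement forces $\tau(h)$ to equal that integer'') combined with~(iii). Your struggle with the $2\pi$ normalization reflects a genuine inconsistency in the paper (the lemma and its proof are tacitly in the period-$1$ convention of \cite{EHN}, not the period-$2\pi$ convention used to define $\tHomeo$ and $\tau$), and you were right to discard the bare oscillation bound $\overline{m}h-\underline{m}h<1$, which as you discovered is one integer too weak without the fixed-point observation in~(ii).
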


\begin{proof}
Take $n \in \Z$  so that $n \leq \tau(h) < n+1$. Assume that $n+1 \leq \overline{m}h$. Since $\tau(h) < n+1$, we have $h(x) - x < n+1$ for some $x \in \R$. By the intermediate value theorem, we have $h(y) - y = n+1$ for some $y \in \R$. Since $h \in \tHomeo$, we have $h^s(y) - y = s(n+1)$, which implies that  $\tau(h) = n+1$. It contradicts to $\tau(h) < n+1$. Therefore, by contradiction, we have $\overline{m}h < n+1$, which implies $\lceil \overline{m}h \rceil - 1 \leq \tau (h)$. The second inequality is proved in a similar way.
\end{proof}

Theorem \ref{thm:EHN}, a refinement of the Milnor--Wood inequality, follows from \eqref{eq:EHNb} and Lemma \ref{lem:tr}.

\begin{proof}[Proof of Theorem \ref{thm:EHN}]
By \eqref{eq:EHNb} and Lemma \ref{lem:tr}, we have
\begin{align}
0 & \leq -2 + 2g + m - \sum_{j=1}^m \tau(h_j) - \beta_0 \quad\quad \text{and} \label{eq:EHN1} \\
0 & \leq -2 + 2g + m + \sum_{j=1}^m \tau(h_j) + \ell + \beta_0. \label{eq:EHN2} 
\end{align}
Since $e(\rho) = - \beta_0 - \sum_{i=1}^{\ell}\frac{\beta_i}{\alpha_i} - \sum_{j=1}^m \tau(h_j)$ by Proposition \ref{prop:erho} and it is well known that $\eo(\Sigma) = 2 - 2g - m - \sum_{i=1}^{\ell}\frac{\alpha_i - 1}{\alpha_i}$ (see, e.g., \cite[Eq.\ 13.3.4]{Thurston4}), we have the inequality \eqref{eq:EHN0}.
\end{proof}

Theorem \ref{thm:EHNeq} is immediately proved by using the same notation as follows.

\begin{proof}[Proof of Theorem \ref{thm:EHNeq}]
By definition of Seifert invariants, we have 
\begin{align}\label{eq:ab}
 0 & \leq \frac{(\alpha_{i}-1)-\beta_i}{\alpha_i}, & 0 & \leq \frac{\beta_{i}-1}{\alpha_i}
\end{align}
for each $i=1,\dots,\ell$. Therefore, we have $e(\rho) = \eo(\Sigma)$ if and only if we have the equality in the first inequality in \eqref{eq:EHN0} and $\beta_i = \alpha_i - 1$ for every $i = 1, \dots, \ell$. Similarly we have $e(\rho) = -\eo(\Sigma)$ if and only if we have the equality in the second inequality in \eqref{eq:EHN0} and $\beta_i = 1$ for every $i = 1, \dots, \ell$. 
\end{proof}

\end{document}